\definecolor{refkey}{gray}{.5}   
\definecolor{labelkey}{gray}{.5} 
\definecolor{Red}{rgb}{1,0,0}
\newcommand{\surj}{\twoheadrightarrow}	
		\newcommand{\wt}{\widetilde}
\newtheorem{theorem}{Theorem}[section]
\newtheorem{proposition}[theorem]{Proposition}
\newtheorem{lemma}[theorem]{Lemma}
\newtheorem{corollary}[theorem]{Corollary}
\newtheorem{question}[theorem]{Question}
\theoremstyle{definition}
\newtheorem{remark}[theorem]{Remark}
\newtheorem{definition}[theorem]{Definition}
		\newcommand{\gs}{\sigma}
\newcommand{\BQ}{\mbox{$\mathbb Q$}}
\newcommand{\mm}{\mbox{$\mathfrak m$}}
\newcommand{\ot}{\mbox{\,$\otimes$\,}}	\newcommand{\op}{\mbox{$\oplus$}}
\def\proof{\paragraph{{\bf Proof}}}
\title{Monic inversion principle and complete intersection}
\author{Manoj K. Keshari and Soumi Tikader}
\begin{document}

\maketitle
\subjclass 2020 Mathematics Subject Classification:{13C10, 13B25}

 \keywords {Keywords:}~ {Projective modules, elementary orthogonal group, local global principle.}

 \begin{abstract}
Let $A$ be a regular ring of dimension $d$ essentially of finite type over an infinite field $k$ of characteristic $\neq 2$. Let $P$ be a projective $A$-module of rank $n$ with $2n\geq d+3$. Let $I$ be an ideal of $A[T]$ of height $n$ and 
$\phi:P[T]\surj I/I^2$ be a surjection. If $\phi\otimes A(T)$ has a surjective lift $\theta :P[T]\otimes A(T)\surj IA(T)$, then $\phi$ has a surjective lift $\Phi:P[T]\surj I$.
The case $P=A^n$ is due to Das-Tikader-Zinna \cite{DTZ}.
\end{abstract}

\section{Introduction}

{\bf Assumptions :} Rings are commutative noetherian with $1$ and projective modules are finitely generated of constant rank.

The monic inversion principle is a recurring theme in the area of projective modules and complete intersection ideals over polynomial  rings.
Let $A$ be a ring and $\wt P$ be a projective $A[T]$-module. Let $A(T)$ be the ring obtained by inverting all monic polynomials in $A[T]$. Horrocks \cite{H} proved that if $A$ is local, then $\wt P\ot A(T)$ is free if and only if $\wt P$ is free. Quillen \cite{Q} proved the local global principle that $\wt P$ is extended from $A$ if and only if $\wt P\ot A_{\mathfrak m}[T]$ is extended from $A_{\mathfrak m}$ for all $\mm \in Max(A)$.  Using local global principle and Horrocks result, Quillen \cite{Q} proved that $\wt P\ot A(T)$ is free if and only if $\wt P$ is free. 
Thus freeness property of projective $A[T]$-modules satisfies monic inversion principle. 

We say $\wt P$ has a unimodular element if there exist a surjection $\wt P\surj A[T]$ i.e. $\wt P\simeq Q\op A[T]$ for some projective $A[T]$-module $Q$. Roitman \cite{Roit} proved the analogue of Horrocks that if $A$ is local, then $\wt P\ot A(T)$ has a unimodular element if and only if $\wt P$ has a unimodular element. We do not yet have complete  analogue of Quillen though partial results are known. When $A$ contains $\BQ$, then Bhatwadekar-Sridharan \cite{BS1} proved that if $rank (\wt P) = dim(A)$, then 
$\wt P\ot A(T)$ has a unimodular element if and only if $\wt P$ has a unimodular element. Further if $A$ is a regular domain of dimension $d$ containing a field and $rank(\wt P)\geq \frac 12( d+3)$, then Bhatwadekar-Keshari \cite{BK} proved that $ \wt P\ot A(T)$ has a unimodular element if and only if $\wt P$ has a unimodular element. See \cite{KT} for recent results on this problem.

Now we will discuss monic inversion principle for complete intersection ideals.
Let $A$ be a regular domain of dimension $d$ containing a field $k$ and $P$ be a projective $A$-module of rank $n$ with $2n\geq d+3$. Let $I$ be an ideal of $A[T]$ of height $n$ and $\phi:P[T]\surj I/I^2T$ be a surjection. Bhatwadekar-Keshari \cite[Proposition 4.9]{BK} proved that $\phi\ot A(T)$ has a surjective lift $\theta: P\ot A(T) \surj IA(T)$ if and only if  $\phi$ has a surjective lift $\Phi:P[T]\surj I$.  If we further assume that $k$ is infinite perfect and
$A$ is essentially of finite type over $k$ i.e. $A$ is a localization of an affine $k$-algebra, then any surjection $\phi:P[T]\surj I/I^2T$ always lifts to a surjection $\Phi:P[T]\surj I$ \cite[Theorem 4.13]{BK}. This result answered a question of Nori and the proof uses above mentioned result \cite[Proposition 4.9]{BK}. We note that giving a surjection $\phi:P[T]\surj I/I^2T$ is equivalent to giving two surjections $\psi:P[T]\surj I/I^2$ and $f:P\surj I(0)$ such that $\psi(0) = f\ot A/I(0)$ \cite[Remark 3.9]{BS2}. If we are given a surjection $\phi:P[T]\surj I/I^2$ only, then $\phi$ need not have a surjective lift $\Phi:P[T]\surj I$, see the introduction of \cite{BK}. Thus we need some extra condition to ensure that the surjection $\phi :P[T]\surj I/I^2$ has a surjective lift $\Phi:P[T]\surj I$. One such condition was formulated by Nori that there exist a surjection $f:P\surj I(0)$ with $\phi(0)=f\ot A/I(0)$ which was considered in \cite{BK}. Another such condition was formulated in \cite{D,DTZ} as monic inversion principle.

\begin{question}\label{q}
Let $A$ be a regular domain of dimension $d$ containing a field $k$ and $P$ be a projective $A$-module of rank $n$ with $2n\geq d+3$. Let $I$ be an ideal of $A[T]$ of height $n$
and $\phi:P[T]\surj I/I^2$ be a surjection. Assume $\phi\ot A(T)$ has a surjective lift $\theta :P[T]\ot A(T)\surj IA(T)$. Does $\phi$ has a surjective lift $\Phi:P[T]\surj I$? 
\end{question}

Das-Tikader-Zinna \cite[Theorem 4.11]{DTZ} answered Question \ref{q} under additional assumption that $P=A^n$ is free, $A$ is essentially of finite type over $k$ and $k$ is infinite perfect of characteristic $\neq 2$. Their proof uses \cite[Theorem 4.13]{BK} crucially. We will strengthen this result in (\ref{4.13}) by removing perfect assumption,
extend the necessary results in \cite{DTZ} from free modules to projective modules and answer Question \ref{q} as follows (\ref{mt}).

\begin{theorem}
Let $A$ be a regular domain of dimension $d$ essentially of finite type over an infinite field $k$ of characteristic $\neq 2$. Let $P$ be a projective $A$-module of rank $n$ with $2n\geq d+3$. Let $I$ be an ideal of $A[T]$ of height $n$ and 
$\phi:P[T]\surj I/I^2$ be a surjection. If $\phi\otimes A(T)$ has a surjective lift $\theta :P[T]\otimes A(T)\surj IA(T)$, then $\phi$ has a surjective lift $\Phi:P[T]\surj I$.
\end{theorem}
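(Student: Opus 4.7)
The plan is to derive the theorem from a strengthened form of \cite[Theorem 4.13]{BK} together with a projective-module generalization of the argument of Das-Tikader-Zinna \cite{DTZ}.

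By \cite[Remark 3.9]{BS2}, giving a surjection $\phi':P[T]\surj I/I^2T$ is equivalent to giving a compatible pair: a surjection $\phi:P[T]\surj I/I^2$ and a surjection $f:P\surj I(0)$ with $\phi(0)=f\otimes A/I(0)$. Theorem (\ref{4.13}), the first announced strengthening of the paper, will drop the ``perfect'' hypothesis in \cite[Theorem 4.13]{BK} and assert that over any infinite field $k$ of characteristic $\neq 2$, every surjection $\phi':P[T]\surj I/I^2T$ admits a surjective lift $\Phi:P[T]\surj I$. Granting (\ref{4.13}), the main theorem reduces to constructing, from the data $(\phi,\theta)$, a surjection $f:P\surj I(0)$ compatible with $\phi(0)$. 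The pair $(\phi,f)$ will then correspond to a surjection $\phi':P[T]\surj I/I^2T$ to which (\ref{4.13}) applies, producing $\Phi:P[T]\surj I$ that automatically lifts $\phi$ modulo $I^2$.

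To construct such an $f$, I would extend the elementary orthogonal group machinery of \cite{DTZ}, which handles the free case $P=A^n$. There, one associates to $\phi$ and $\theta$ a symbol in an elementary orthogonal group over $A[T]$ whose triviality is equivalent to the existence of a compatible $f$; the monic inversion hypothesis trivializes the symbol over $A(T)$, and a local-global principle for the elementary orthogonal group transports this vanishing back to $A[T]$, from which $f$ is read off. To carry this over to projective $P$ of rank $n$ with $2n\geq d+3$, one replaces the free hyperbolic module $A^{2n}$ by $P\oplus P^*$ and reproves the analogous transitivity and local-global statements for the corresponding elementary orthogonal group acting on $P\oplus P^*$. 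The standard route is to embed $P$ as a summand of a large free module, apply the DTZ results there, and descend via the idempotent cutting out $P$, using the rank bound to produce the needed generic sections at each stage.

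The main obstacle I anticipate is establishing these projective local-global and transitivity statements for the elementary orthogonal group on $P\oplus P^*$. In the free case they rest on the work of Suslin-Kopeiko and Bhatwadekar-Keshari, but the translation to projective modules requires careful checking that elementary generators and their commutation relations survive the splitting $A^m=P\oplus Q$, and that the rank bound $2n\geq d+3$ suffices to produce unimodular sections at each stage. The removal of the perfect hypothesis in (\ref{4.13}), though indispensable for the final statement, is comparatively routine, resting on the infinite-field hypothesis which permits a generic linear change of coordinates to arrange separability in the Bertini-type step of the \cite{BK} argument.
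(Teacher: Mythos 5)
Your high-level plan matches the paper's: reduce, via \cite[Remark 3.9]{BS2} and the strengthened Theorem~\ref{4.13}, to producing a surjection $f:P\surj I(0)$ compatible with $\phi(0)$, and extract $f$ from the hypothesis by extending the elementary-orthogonal-group machinery of \cite{DTZ} from free to projective modules. The paper carries out exactly that reduction inside Theorem~\ref{polynomial}, and extracts $f$ through Theorems~\ref{mondal} and~\ref{monicquadratic}: the hypothesis gives $[H(T)_g]=[(0,0,1)]$ in $\pi_0(\mathbb Q'(P[T]_g))$ for a monic $g$, monic inversion moves this back to $\pi_0(\mathbb Q'(P[T]))$, and setting $T=0$ combined with Theorem~\ref{mondal} yields $f$.

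The substantive gap is the mechanism you propose for the free-to-projective extension. Embedding $P$ as a summand of $A^m$ and ``descending via the idempotent cutting out $P$'' does not give what you need: from $A^m=P\oplus Q$ one gets $\mathbb M(A^m)\cong\mathbb M(P)\perp\mathbb H(Q)$, but an element of $EO(\mathbb M(A^m))$ has no reason to stabilize the summand $\mathbb M(P)$, and there is no retraction $EO(\mathbb M(A^m))\to EO(\mathbb M(P))$ through which free-case transitivity or local-global statements could be pushed down. The paper instead argues intrinsically with $EO(\mathbb M(P))$: the local-global principle for $EO(\mathbb M(P[T]),T)$ is imported from Ambily--Rao \cite{AR} (Theorem~\ref{th2}), which reduces everything to the local case, where $P$ becomes free of its own accord; there Stavrova \cite{St} gives $O(\mathbb M(A[T]^n),T)=EO_{2n+1}(A[T],T)$, Lemma~\ref{rmk6} identifies $EO_{2n+1}$ with $EO(\mathbb M(A^n))$, and Quillen-type splitting (Corollary~\ref{split}) then supplies the patching used in Theorem~\ref{monicquadratic}. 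A further inaccuracy worth flagging: the removal of the perfect hypothesis in Theorem~\ref{4.13} is not done by a Bertini-type separability argument but by quoting Das's $k$-spot result \cite[Theorem 4.2]{D1} and sheafifying over the ideal of $s\in A$ for which $\phi_s$ lifts, a device already implicit in \cite{BK}.
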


A few words about the layout of the paper. Let $A$ be a ring with $1/2\in A$ and $P$ be a projective $A$-module of rank $n$. Let $\mathbb M(P)=P\op P^*\op A$ be the quadratic space with quadratic form $q(p,f,z)=f(p)+z^2$. We recall the definition of elementary orthogonal group $EO(\mathbb M(P))$ and prove that $EO(\mathbb M(A^n))=EO_{2n+1}(A)$. Using local global principle for $EO(\mathbb M(P[T]))$ due to Ambily-Rao \cite{AR} and a result of Stavrova \cite{St}, we prove that when $A$ is regular containing a field, then $O(\mathbb M(P[T]),T)=EO(\mathbb M(P[T]),T)$ and derive the splitting principle that if $b_1,b_2$ are comaximal, then any $\sigma \in EO(\mathbb M(P_{b_1b_2}))$ splits as $(\alpha_1)_{b_2}\circ (\alpha_2)_{b_1}$ where $\alpha_i \in EO(\mathbb M(P_{b_i}))$.
 
The main theorem is proved as follows.
Given a surjection $\phi:P[T]\surj I/I^2$, we get an element $(I,\phi)\in \mathcal {LO}(P[T])$ and hence an element $H(T)\in \mathbb Q'(P[T])$. Using a result of Mandal-Mishra \cite{MM}, we show that $\phi$ lifts to a surjection $\Phi:P[T]\surj I$ if and only if the elements $H(T)$ and $(0,0,1)$ of $\mathbb Q'(P[T])$ are connected by an element of $EO(\mathbb M(P[T]))$. Given that $\phi\ot A(T)$ has a surjective lift $\theta : P[T]\ot A(T)\surj IA(T)$, the elements $H(T)\ot A(T)$ and $(0,0,1)$ of $\mathbb Q'(P[T]\ot A(T))$ are connected by an element of $EO(\mathbb M(P[T]\ot A(T)))$. Using this we descend to the required.

The techniques and results of Mandal-Mishra \cite{MM} are crucially used.

\section{Elementary orthogonal group}

\begin{definition}\label{transvec} 
Let $A$ be a ring with $1/2\in A$.
The elementary orthogonal group  $EO_{2n+1}(A)$ is the subgroup 
of $O_{2n+1}(A)$ generated by the following five types of elementary orthogonal transvections defined as follows \cite[Definition 2.2]{DTZ}.
For $1\leq i\neq j\leq n$ and $\lambda\in A$, $(x_1,\cdots,x_n,y_1,\cdots,y_n,z) \in A^{2n+1}$
maps to
\begin{enumerate}
\item [(I)]
$(x_1,\cdots,x_{i-1},x_i-\lambda^2 y_i +2\lambda z,x_{i+1},\cdots,y_n,z-\lambda y_i).$ 
\item [(II)]
$(x_1,\cdots,y_{i-1},y_i-\lambda^2 x_i+2\lambda z,y_{i+1},\cdots,y_n,z-\lambda x_i).$
\item [(III)]
$ (x_1,\cdots,x_{i-1}, x_i+\lambda x_j,x_{i+1},\cdots, y_{j-1},y_j-\lambda y_i,y_{j+1},\cdots, y_n,z).$ 
\item [(IV)]
$ (x_1,\cdots,x_{i-1},x_i+\lambda y_j,\cdots, x_j-\lambda y_i, x_{j+1},\cdots,y_n,z).$ 
\item [(V)]
$  (x_1,\cdots, y_{i-1},y_i+\lambda x_j, \cdots,y_j-\lambda x_i,y_{j+1},\cdots, y_n,z).$

\end{enumerate}
 \end{definition}     
      
\begin{definition}\label{def} Let $A$ be a ring with $1/2\in A$ and $P$ be a projective $A$-module.  Let
   $\mathbb {M} (P) = \mathbb{H}(P) \perp A = P\oplus P^* \oplus A$ denote the projective module and also the quadratic space with quadratic form  
$q: \mathbb {M} (P) \rightarrow A$ given by $q (p,f,z)= f(p)+z^2$. Let $O(\mathbb M(P))$ be the group of orthogonal transformations of $\mathbb M(P)$. 

For linear maps $\alpha: A \rightarrow P$ and $\beta : A \rightarrow P^*$ with
dual maps $\alpha^*:P^*\to A$ and $\beta^*:P\to A$, define
$E_\alpha,E_\beta \in O(\mathbb{M}(P))$ 
by
\begin{enumerate}
\item $E_\alpha (p,f,z) = \left(p-\alpha\alpha^* (f)+2\alpha(z),f,z-\alpha^*(f)\right)$.

\item
$E_\beta (p,f,z) = \left(p,f-\beta\beta^*(p)+2\beta(z),z-\beta^*(p) \right)$.
\end{enumerate}

The elementary orthogonal group ${EO} (\mathbb{M}(P)) $ is the subgroup of $O(\mathbb{M}(P))$ generated by $E_\alpha$ and $E_\beta$ for all $\alpha,\beta$. Note $E_{-\alpha}=E_{\alpha}^{-1}$ and $E_{-\beta}=E_{\beta}^{-1}$. We will define some commutator relations where $[g,h]=g^{-1}h^{-1}gh$.

\begin{enumerate}
\item[(3)] 
If $ \beta^*(\alpha (1)) = 0$, then
$ E_{\alpha,\beta} = [E_{\beta}, E_{\alpha}] \in {EO}(\mathbb{M}(P))$ is defined as
$$E_{\alpha,\beta} (p,f,z) = (p+2\alpha\beta^*(p), f-2\beta\alpha^*(f),z )$$

\item[(4)]
If $\alpha_1 , \alpha_2 : A\rightarrow P $, then 
 $E_{\alpha_1, \alpha_2} = [E_{\alpha_2}, E_{\alpha_1}] \in {EO}(\mathbb{M}(P)) $ is defined as
$$E_{\alpha_1 ,\alpha_2} (p,f,z) =(p-2\alpha_2\alpha_1 ^*(f) +2 \alpha_1 \alpha_2^*(f) ,f,z)$$ 

\item[(5)]
If $\beta_1 , \beta_2 : A\rightarrow P^* $, then 
$E_{\beta_1 ,\beta_2} = [E_{\beta_2}, E_{\beta_1}] \in {EO}(\mathbb{M}(P)) $ is defined as 
 $$E_{\beta_1, \beta_2}(p,f,z) = ( p,f-2\beta_2\beta_1^*(p) +2 \beta_1\beta_2^*(p),z)$$
 
 \end{enumerate}

We note an identity which will be used in \ref{rmk6}. For $\alpha,\alpha':A\to P$ and $\beta,\beta':A\to P^*$,
$$ E_{\alpha+\alpha'} = E_{-\alpha/2,\alpha'}\circ E_{\alpha'}\circ E_{\alpha}~~\mbox{and}~~
E_{\beta+\beta'} = E_{-\beta/2,\beta'}\circ E_{\beta'}\circ E_{\beta}$$
\end{definition}

\begin{remark}
Roy \cite{Ro} defined elementary orthogonal group as the group generated by $E_\alpha$ and $E_\beta$ where $E_\alpha (p,f,z) = \left(p-\frac 12 \alpha\alpha^* (f)+\alpha(z),f,z-\alpha^*(f)\right)$ and
$E_\beta (p,f,z) = \left(p,f-\frac 12 \beta\beta^*(p)+\beta(z),z-\beta^*(p) \right)$.
We note that the two groups, the one defined by Roy and the one defined above,  are same. They both preserve the bilinear form $B$ defined by $B((p,f),(p',f'))=q(p+p',f+f')-q(p,f)-q(p',f')$, where $q$ is the quadratic form of $\mathbb H(P)$ defined by $q(p,f)=f(p)$. The difference in the definition of generators is due to the fact that Roy uses bilinear form $B$ to define the quadratic form whereas we use $\frac 12 B$.
\end{remark}

\begin{lemma}\label{vevpt}
Let $A$ be a ring with $1/2\in A$ and $P$ be a projective $A$-module.
If $\phi \in {EO}(\mathbb{M}(P))$, then there exist $\Phi(T) \in {EO}(\mathbb{M}(P[T]))$ such that $\Phi(0) = Id_{\mathbb{M}(P)}$ and $\Phi(1)= \phi$ i.e. every element of $  {EO}(\mathbb{M}(P))$ is homotopic to identity.
 \end{lemma}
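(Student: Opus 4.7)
The plan is to use the generating set of $EO(\mathbb{M}(P))$ and construct the homotopy one generator at a time, then multiply. Since $\phi\in EO(\mathbb{M}(P))$ and by Definition \ref{def} this group is generated by the transvections $E_\alpha$ and $E_\beta$ (with $E_{-\alpha}=E_\alpha^{-1}$ and $E_{-\beta}=E_\beta^{-1}$), I can write $\phi=g_1g_2\cdots g_r$ where each $g_i$ is either $E_{\alpha_i}$ for some $A$-linear map $\alpha_i:A\to P$, or $E_{\beta_i}$ for some $\beta_i:A\to P^*$.

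For each $\alpha:A\to P$, define the $A[T]$-linear map $T\alpha:A[T]\to P[T]$ by $1\mapsto T\alpha(1)$; its dual is $(T\alpha)^*=T\alpha^*:P^*[T]\to A[T]$. Plugging into Definition \ref{def}(1) yields
$$E_{T\alpha}(p,f,z)=\bigl(p-T^2\alpha\alpha^*(f)+2T\alpha(z),\ f,\ z-T\alpha^*(f)\bigr),$$
which is by construction an element of $EO(\mathbb{M}(P[T]))$ that specializes to $\mathrm{Id}_{\mathbb{M}(P)}$ at $T=0$ and to $E_\alpha$ at $T=1$. The analogous recipe with Definition \ref{def}(2) produces $E_{T\beta}\in EO(\mathbb{M}(P[T]))$ with $E_{T\beta}|_{T=0}=\mathrm{Id}$ and $E_{T\beta}|_{T=1}=E_\beta$.

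Now set $\Phi(T):=\tilde g_1(T)\tilde g_2(T)\cdots\tilde g_r(T)$, where $\tilde g_i(T)=E_{T\alpha_i}$ if $g_i=E_{\alpha_i}$ and $\tilde g_i(T)=E_{T\beta_i}$ if $g_i=E_{\beta_i}$. As a product of elementary generators of $EO(\mathbb{M}(P[T]))$, the element $\Phi(T)$ lies in this group, and evaluation gives $\Phi(0)=\mathrm{Id}_{\mathbb{M}(P)}$ and $\Phi(1)=g_1\cdots g_r=\phi$, which is the required homotopy. There is really no serious obstacle here; the only thing worth verifying is that $\alpha\mapsto T\alpha$ and $\beta\mapsto T\beta$ still produce legitimate $A[T]$-linear maps landing in the generating set of $EO(\mathbb{M}(P[T]))$, and this is immediate from the definitions.
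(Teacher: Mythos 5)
Your proof is correct and matches the paper's argument: both reduce to the generators $E_\alpha$, $E_\beta$ and replace $\alpha$ by $T\alpha$ (resp.\ $\beta$ by $T\beta$) to produce the homotopy, then multiply the resulting one-parameter families. You have merely spelled out the reduction-to-generators step that the paper states in one line.
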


\proof
It is enough to assume that $\phi$ is one of the two types of generators of ${EO}(\mathbb{M}(P)).$
If $\phi = E_\alpha$ for $\alpha : A\rightarrow P$, then $\Phi(T)=E_{{\alpha(T)}}$, where
${\alpha(T)}: A[T] \rightarrow P[T]$ is defined by ${\alpha(T)} (1)= T\alpha (1).$  
Similarly, if $\phi = E_\beta$ for $\beta : A\rightarrow P^*$, then $\Phi(T)=E_{{\beta(T)}}$, where 
${\beta(T)}: A[T] \rightarrow P^*[T]$ is defined by ${\beta(T)} (1)= T\beta (1).$
\qed

\begin{lemma} \label{rmk6}
Let $A$ be a ring with $1/2\in A$. Then $ EO_{2n+1}(A) 
={EO}(\mathbb{M}(A^n))$.
\end{lemma}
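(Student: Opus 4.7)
The plan is to prove both inclusions by identifying generators on each side. Identify $\mathbb{M}(A^n)$ with $A^{2n+1}$ via the standard bases $\{e_i\}$ of $A^n$ and $\{e_i^*\}$ of $(A^n)^*$, and write an element as $(x_1,\ldots,x_n,y_1,\ldots,y_n,z)$.

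For the inclusion $EO_{2n+1}(A)\subseteq EO(\mathbb{M}(A^n))$, I would write each of the five types of elementary orthogonal transvections as a generator of $EO(\mathbb M(A^n))$. Taking $\alpha:A\to A^n$ with $\alpha(1)=\lambda e_i$ produces type (I) directly from the formula for $E_\alpha$; dually, $\beta(1)=\lambda e_i^*$ produces type (II) via $E_\beta$. For type (III), choose $\alpha(1)=(\lambda/2)e_i$ and $\beta(1)=e_j^*$ with $i\neq j$; then $\beta^*(\alpha(1))=0$, so the commutator $E_{\alpha,\beta}$ is defined, and unpacking the formula $E_{\alpha,\beta}(p,f,z)=(p+2\alpha\beta^*(p), f-2\beta\alpha^*(f),z)$ yields exactly the type (III) map. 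For type (IV), take $\alpha_1(1)=e_i$ and $\alpha_2(1)=(\lambda/2)e_j$ ($i\neq j$): the formula for $E_{\alpha_1,\alpha_2}$ becomes $p\mapsto p-\lambda y_i e_j+\lambda y_j e_i$, which matches (IV). Type (V) is analogous with $\beta_1,\beta_2$. Since the three commutators $E_{\alpha,\beta}, E_{\alpha_1,\alpha_2}, E_{\beta_1,\beta_2}$ all lie in $EO(\mathbb M(A^n))$ by Definition~\ref{def}, all five generators do.

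For the reverse inclusion, the task is to show that an arbitrary $E_\alpha$ (resp.\ $E_\beta$) lies in $EO_{2n+1}(A)$. Decompose $\alpha(1)=\sum_i a_i e_i$ so that $\alpha=\sum_i \alpha_i$ with $\alpha_i(1)=a_i e_i$. Each $E_{\alpha_i}$ is a type (I) transvection and each commutator $E_{\alpha_i,\alpha_j}$ ($i\neq j$) is a type (IV) transvection by the computation above. Applying the identity $E_{\alpha+\alpha'}=E_{-\alpha/2,\alpha'}\circ E_{\alpha'}\circ E_{\alpha}$ (which requires $1/2\in A$) inductively, $E_\alpha$ is a product of type (I) and type (IV) transvections, hence lies in $EO_{2n+1}(A)$. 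The identical argument with the dual identity $E_{\beta+\beta'}=E_{-\beta/2,\beta'}\circ E_{\beta'}\circ E_{\beta}$ shows $E_\beta\in EO_{2n+1}(A)$ using types (II) and (V).

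The only delicate point is keeping track of the factors of $\tfrac12$ and $2$ in the generators so that the commutator formulas produce the transvections of Definition~\ref{transvec} on the nose rather than up to a nonzero scalar; the hypothesis $1/2\in A$ is used both to enable the inductive decomposition of $\alpha$ and $\beta$, and to insert the correct halving factors in the $\alpha(1)=(\lambda/2)e_i$ choices above.
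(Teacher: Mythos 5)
Your proposal is correct and follows essentially the same route as the paper: both inclusions are established generator by generator, and the reverse inclusion uses the same decomposition $\alpha=\sum_i\alpha_i$ together with the identity $E_{\alpha+\alpha'}=E_{-\alpha/2,\alpha'}\circ E_{\alpha'}\circ E_{\alpha}$. The one place where your write-up diverges from the paper is in how the commutator term is disposed of. You claim that the inductive application of the identity exhibits $E_\alpha$ as a product of type~(I) and type~(IV) transvections; this is true, but the identity actually produces terms of the form $E_{-\alpha_1/2,\,\alpha_2+\cdots+\alpha_n}$, and turning these into single type~(IV) pieces needs the (unstated) observation that $E_{\alpha_1,\alpha_2}$ is additive in the second argument, so that $E_{-\alpha_1/2,\,\alpha_2+\cdots+\alpha_n}=\prod_{j\ge 2}E_{-\alpha_1/2,\alpha_j}$. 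The paper instead writes $\alpha=\alpha_1+\alpha''$, invokes the inductive hypothesis $E_{\alpha''}\in EO_{2n+1}(A)$, and concludes immediately that $E_{-\alpha_1/2,\alpha''}=[E_{\alpha''},E_{-\alpha_1/2}]\in EO_{2n+1}(A)$ because $EO_{2n+1}(A)$ is a group closed under commutators; this sidesteps the additivity point altogether. Your normalization for type~(III) (putting the factor $1/2$ into $\alpha$ rather than into $\beta$) is a harmless variant of the paper's choice and produces the same transvection.
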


\begin{proof}

For forward inclusion we will show that five type of generators of $EO_{2n+1}(A)$ is a generator of ${EO}(\mathbb{M}(A^n)).$  Let $\delta_{ij}$ be the Kronecker delta function. 

\begin{enumerate}
\item[1]
For (I),  take $E_\alpha$, where $\alpha : A \rightarrow A^n $ with $\alpha(1)=\lambda  e_i$ and $\alpha^* : A^n \rightarrow A$ with $ \alpha^*(e_k)= \lambda \delta_{ik}$. 

\item[2]
For (II), take $E_\beta$, where $\beta : A \rightarrow A^n $ with $\beta(1) =\lambda  e_i$ and $\beta^* : A^n \rightarrow A$ with $ \beta^*(e_k)= \lambda \delta_{ik}$.
 
\item[3]
For (III), take  $E_{\alpha,\beta}$, where  $\alpha : A \rightarrow A^n $ with $\alpha(1)= e_i$ and $\beta^* : A^n \rightarrow A$ with $ \beta^*(e_k)= 
\frac 12 \lambda \delta_{jk}$.

\item[4] 
 For (IV), take  $E_{\alpha_1, \alpha_2}$, where $\alpha_1,\alpha_2: A \rightarrow A^n$ with $\alpha_1(1)=e_i$ and $\alpha_2 (1)= \frac 12 \lambda e_j.$
 
\item[5]
For type (V), take $E_{\beta_1, \beta_2}$, where $\beta_1,\beta_2: A \rightarrow A^n$ with $\beta_1(1)=e_i$ and $\beta_2 (1)=\frac 12 \lambda e_j.$
\end{enumerate}

For reverse inclusion we need to show that given $\alpha:A\to A^n$ and $\beta : A\to (A^n)^*$, $E_\alpha,E_\beta\in EO_{2n+1}(A)$. We will prove for $E_\alpha$ as $E_\beta$ case is similar. 

If $\alpha(1) = a_ie_i$, then $E_\alpha$ is of type (I). Let $\alpha(1)=\sum a_i e_i$. Define $\alpha_i:A\to A^n$ by $\alpha_i(1)=a_i e_i$. Then $\alpha=\sum_1^n \alpha_i = \alpha_1+\alpha''$, where $\alpha''=\sum_2^n \alpha_i$. By induction on $n$,
$E_{\alpha''} \in EO_{2n+1}(A)$.
We have observed earlier that 
$E_{\alpha_1 + \alpha''} = E_{-\alpha_1/2,\alpha''
} \circ E_{\alpha''}\circ E_{\alpha_1}$, where $E_{-\alpha_1/2,\alpha''
}=[E_{\alpha''},E_{-\alpha/2}]$ is the commutator. Thus $E_\alpha\in  EO_{2n+1}(A)$. $\hfill \square$
\end{proof}
\medskip

The following local global principle follows from Ambily-Rao \cite[Theorem 3.10]{AR}.

\begin{theorem}\label{th2} 
Let $A$ be a ring with $1/2\in A$ and $P$ be a projective $A$-module of rank $\geq 2$.
Let $\sigma(T) \in O(\mathbb M(P[T]),T)$ such that $\sigma_{\mathfrak{m}}(T)  \in {EO}(\mathbb{M}(P_{\mathfrak{m}}[T]))$ for all $\mathfrak{m}\in Max(A)$.  Then $\sigma(T) \in {EO}(\mathbb{M}(P[T]), T).$ 
\end{theorem}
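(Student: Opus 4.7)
The plan is to obtain Theorem \ref{th2} as a direct application of Ambily-Rao's local-global principle \cite[Theorem 3.10]{AR}, using Lemma \ref{rmk6} to translate between the projective and free frameworks. First I would match the hypotheses. Since $\sigma(T) \in O(\mathbb M(P[T]), T)$, the symbol "$T$" records that $\sigma(0) = \mathrm{Id}_{\mathbb M(P)}$. Localizing at any $\mathfrak m \in \mathrm{Max}(A)$ gives $\sigma_{\mathfrak m}(0) = \mathrm{Id}$, which combined with the assumption $\sigma_{\mathfrak m}(T) \in EO(\mathbb M(P_{\mathfrak m}[T]))$ already places $\sigma_{\mathfrak m}(T)$ in $EO(\mathbb M(P_{\mathfrak m}[T]), T)$. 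This is exactly the "locally elementary, based at $T=0$" hypothesis of the Ambily-Rao principle.

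Next, I would invoke \cite[Theorem 3.10]{AR} for the projective $A$-module $P$ of rank $n\geq 2$ to read off $\sigma(T) \in EO(\mathbb M(P[T]), T)$, which is the desired conclusion. The rank bound $n \geq 2$ that appears in the statement is precisely the one needed to satisfy the hypothesis of the Ambily-Rao theorem (below rank $2$ the elementary orthogonal group can be too small to reach arbitrary local elements).

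The step I expect to be the main obstacle is matching frameworks: \cite[Theorem 3.10]{AR} may be phrased for $EO_{2n+1}$ of a free module rather than $EO(\mathbb M(P[T]))$ of a projective one. To bridge this, I would cover $\mathrm{Spec}(A)$ by principal opens $\{D(s_i)\}$ on which $P_{s_i}$ is free (possible since $P$ is projective of constant rank), use Lemma \ref{rmk6} to identify $EO(\mathbb M(P_{s_i}[T]))$ with $EO_{2n+1}(A_{s_i}[T])$, apply the free version of the Ambily-Rao principle on each patch, and then patch the resulting factorizations. The patching is essentially formal: both sides localize compatibly, and the generators $E_\alpha, E_\beta$ of Definition \ref{def} are defined on any module $P$, so an element that is elementary after inverting each $s_i$ is elementary globally. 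The condition $\sigma(0) = \mathrm{Id}$ transfers locally and is preserved by the construction, placing the output in $EO(\mathbb M(P[T]), T)$ as required.
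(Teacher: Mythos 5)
Your main move — invoking Ambily-Rao's Theorem 3.10 — is exactly what the paper does; the paper gives no further argument and simply states that the theorem ``follows from'' that reference. So the core of your proposal matches the paper. Your anticipated obstacle, however, does not actually arise: the Ambily-Rao paper formulates the DSER elementary orthogonal group and its local-global principle for quadratic spaces of the form $Q \perp \mathbb{H}(P)$ with $P$ finitely generated projective, so the projective case is already within scope; taking $Q = A$ with $q(z)=z^2$ gives $\mathbb M(P)$ directly. (Lemma \ref{rmk6} is used elsewhere in the paper, for instance in the proof of Theorem \ref{th1}, to invoke Stavrova's result in the local free case — not to bridge a gap in Theorem \ref{th2}.)

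One caution about your fallback plan, even though it turns out to be unnecessary: the final claim that ``an element that is elementary after inverting each $s_i$ is elementary globally'' is false as a formal matter. That implication is precisely what a local-global principle must supply; $EO(\mathbb M(P[T]))$ is not a sheaf of subgroups of $O(\mathbb M(P[T]))$, and localization does not reflect membership without the additional Quillen-type splitting argument (cf. Corollaries \ref{split1}, \ref{split}, which are themselves derived from Theorem \ref{th2}, so relying on them here would be circular). If a framework mismatch had genuinely arisen, you would need to either redo the Quillen splitting in the projective setting from scratch or find a version of the Ambily-Rao theorem already stated at the required generality — the latter is what actually applies here.
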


As a consequence of (\ref{th2}), we have the following.

\begin{corollary}\label{th3}
Let $A$ be a ring with $1/2\in A$ and $P$ be a projective $A$-module of rank $\geq 2$.
Let $\sigma(T) \in O(\mathbb{M} (P[T]))$ such that $\sigma(0)\in {EO}(\mathbb{M}(P))$.
\begin{enumerate} 
\item If $\sigma_{\mathfrak{m}}(T)  \in {EO}(\mathbb{M}(P_{\mathfrak{m}}[T]))$ for all $\mathfrak{m}\in Max(A)$,  then $\sigma(T) \in {EO}(\mathbb{M} (P[T])).$ 

\item  Assume $\sigma(0)=Id$ and $a,b \in A$ be comaximal. If $\sigma_a (T) \in {EO}(\mathbb{M}(P_a[T]),T)$ and $\sigma_b (T) \in {EO}(\mathbb{M}(P_b[T]),T) $, then $\sigma(T) \in {EO}(\mathbb{M}(P[T]),T).$
\end{enumerate}
\end{corollary}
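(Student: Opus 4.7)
The overall strategy is to reduce both parts to Theorem \ref{th2}, whose hypothesis requires the element to already be based at the identity at $T = 0$. In Part (1) this is achieved by a single normalization of $\sigma(T)$, while in Part (2) the hypothesis of Part (1) is verified locally using comaximality.

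For Part (1), I would set $\tau(T) := \sigma(T) \circ \sigma(0)^{-1}$. Then $\tau(0) = Id$, so $\tau(T) \in O(\mathbb{M}(P[T]), T)$, and the goal is to verify $\tau_{\mathfrak{m}}(T) \in EO(\mathbb{M}(P_{\mathfrak{m}}[T]))$ for every maximal ideal $\mathfrak{m}$. The factor $\sigma_{\mathfrak{m}}(T)$ lies in $EO(\mathbb{M}(P_{\mathfrak{m}}[T]))$ by hypothesis. For the other factor, the generators $E_\alpha, E_\beta$ of Definition \ref{def} are manifestly functorial under scalar extension: a linear map $\alpha : A \to P$ extends to $\alpha_{\mathfrak{m}} : A_{\mathfrak{m}} \to P_{\mathfrak{m}}$, and the localization of $E_\alpha$ is exactly $E_{\alpha_{\mathfrak{m}}}$. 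Hence $EO(\mathbb{M}(P))$ maps into $EO(\mathbb{M}(P_{\mathfrak{m}}))$, and the hypothesis $\sigma(0) \in EO(\mathbb{M}(P))$ gives $\sigma(0)_{\mathfrak{m}}^{-1} \in EO(\mathbb{M}(P_{\mathfrak{m}}))$. Theorem \ref{th2} then yields $\tau(T) \in EO(\mathbb{M}(P[T]), T)$, and multiplying on the right by $\sigma(0) \in EO(\mathbb{M}(P))$ gives $\sigma(T) \in EO(\mathbb{M}(P[T]))$.

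For Part (2), note that $\sigma(0) = Id \in EO(\mathbb{M}(P))$ already, so by Part (1) it suffices to check $\sigma_{\mathfrak{m}}(T) \in EO(\mathbb{M}(P_{\mathfrak{m}}[T]))$ for every $\mathfrak{m} \in Max(A)$. Comaximality of $a, b$ guarantees that at least one of them, say $a$, lies outside $\mathfrak{m}$. The localization factors as $A \to A_a \to A_{\mathfrak{m}}$, and by the same functoriality the hypothesis $\sigma_a(T) \in EO(\mathbb{M}(P_a[T]), T)$ passes to $\sigma_{\mathfrak{m}}(T) \in EO(\mathbb{M}(P_{\mathfrak{m}}[T]))$; the case $b \notin \mathfrak{m}$ is symmetric. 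Part (1) now gives $\sigma(T) \in EO(\mathbb{M}(P[T]))$, and combined with $\sigma(0) = Id$ this is the required conclusion $\sigma(T) \in EO(\mathbb{M}(P[T]), T)$.

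The only point requiring care is the base-change compatibility of the generators $E_\alpha, E_\beta$ under localization; this is the common thread making both parts work. Beyond that, both statements amount to immediate packaging of Theorem \ref{th2} together with a normalization trick to land in the subgroup $O(\mathbb{M}(P[T]), T)$ of elements based at the identity.
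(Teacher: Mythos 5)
Your proposal is correct, and since the paper itself gives no proof (it simply asserts the corollary ``as a consequence of (\ref{th2})''), your argument supplies exactly the expected derivation. Part~(1) is done cleanly by normalizing $\tau(T)=\sigma(T)\circ\sigma(0)^{-1}$ and checking the local condition via the functoriality of the generators $E_\alpha,E_\beta$ under scalar extension, and Part~(2) reduces to Part~(1) by observing that each maximal ideal avoids one of the comaximal elements $a,b$.

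One small remark on Part~(2): rather than routing through Part~(1) and then re-assembling the conclusion $\sigma(T)\in EO(\mathbb{M}(P[T]),T)$ from ``$\sigma(T)\in EO(\mathbb{M}(P[T]))$ and $\sigma(0)=Id$'' (which tacitly reads $EO(\mathbb{M}(P[T]),T)$ as the intersection $EO(\mathbb{M}(P[T]))\cap O(\mathbb{M}(P[T]),T)$, the convention this paper appears to use), you could apply Theorem~\ref{th2} to $\sigma(T)$ directly: the hypothesis $\sigma(0)=Id$ already places $\sigma(T)$ in $O(\mathbb{M}(P[T]),T)$, and the comaximality argument you give verifies the local condition, so Theorem~\ref{th2} yields $\sigma(T)\in EO(\mathbb{M}(P[T]),T)$ in one step regardless of which definition of the relative elementary group one adopts. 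This is a stylistic simplification; the substance of your argument is sound.
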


\begin{theorem}\label{th1}
Let $A$ be a regular ring containing a field of characteristics $\neq 2$ and $P$ be a projective $A$-module of rank $n\geq 2$. Then 

\begin{enumerate}\label{1l}
\item $O(\mathbb{M}(P[T]),T) = {EO}(\mathbb{M}(P[T]),T)$.
\item Let $\sigma(T) \in O(\mathbb{M}(P[T])).$ Then $\sigma(T) \in {EO}(\mathbb{M}(P[T]))$ if and only if $\sigma(0) \in {EO}(\mathbb{M}(P)).$ 
\end{enumerate}
\end{theorem}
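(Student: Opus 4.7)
The plan is to reduce (1) to a local statement via the Ambily--Rao local--global principle (Theorem \ref{th2}), with Stavrova's homotopy invariance theorem supplying the local input, and then to derive (2) from (1) by a short constant--element argument.

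For (1), I would take $\sigma(T) \in O(\mathbb M(P[T]),T)$ and verify the hypothesis of Theorem \ref{th2} at each $\mm \in Max(A)$. Since $A$ is regular, $A_\mm$ is regular local, so $P_\mm$ is free of rank $n$; Lemma \ref{rmk6} then identifies $EO(\mathbb M(P_\mm[T]))$ with $EO_{2n+1}(A_\mm[T])$, and under this identification $\sigma_\mm(T)$ lies in $O_{2n+1}(A_\mm[T],T)$. The crucial input is Stavrova's theorem \cite{St}: for the Chevalley group scheme of type $B_n$ (which has split rank $n \geq 2$) over a regular ring containing a field, the relative orthogonal group $O_{2n+1}(R[T],T)$ coincides with the relative elementary group $EO_{2n+1}(R[T],T)$. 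Applied with $R = A_\mm$, this gives $\sigma_\mm(T) \in EO(\mathbb M(P_\mm[T]),T)$ for every $\mm$, and Theorem \ref{th2} then concludes $\sigma(T) \in EO(\mathbb M(P[T]),T)$.

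For (2), the forward direction is immediate: evaluation at $T=0$ is a group homomorphism that sends each generator $E_{\alpha(T)}$ (resp.\ $E_{\beta(T)}$) of $EO(\mathbb M(P[T]))$ to the generator $E_{\alpha(0)}$ (resp.\ $E_{\beta(0)}$) of $EO(\mathbb M(P))$. For the converse, I view $\sigma(0) \in EO(\mathbb M(P))$ as a constant in $EO(\mathbb M(P[T]))$ via the inclusion $A \hookrightarrow A[T]$, and set $\tau(T) := \sigma(T)\sigma(0)^{-1}$. Then $\tau(0) = Id$, so $\tau(T) \in O(\mathbb M(P[T]),T)$; part (1) yields $\tau(T) \in EO(\mathbb M(P[T]),T)$, and hence $\sigma(T) = \tau(T)\sigma(0) \in EO(\mathbb M(P[T]))$.

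The main obstacle is the correct invocation of Stavrova's theorem. Her elementary subgroup is defined abstractly in terms of unipotent radicals of opposite parabolic subgroups of an isotropic reductive group, so one must verify that for the split Chevalley group of type $B_n$ this coincides (under Lemma \ref{rmk6}) with the group $EO_{2n+1}$ generated by the five families of transvections from Definition \ref{transvec}. This identification is standard for Chevalley groups of type $B_n$, but it is the only nontrivial bridge between the local input supplied by Stavrova and the local--global principle of Ambily--Rao.
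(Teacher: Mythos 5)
Your argument reproduces the paper's proof: part (1) reduces via the Ambily--Rao local--global principle (Theorem \ref{th2}) to the local free case, invokes Stavrova for $O_{2n+1}(R[T],T)=EO_{2n+1}(R[T],T)$, and translates back through Lemma \ref{rmk6}; part (2) applies (1) to $\sigma(T)\sigma(0)^{-1}$, exactly as the paper does (the forward direction being trivial). Your closing remark about matching Stavrova's abstractly defined elementary subgroup for type $B_n$ with the transvection-generated $EO_{2n+1}$ is a fair caveat that the paper handles silently by direct citation.
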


\proof
(1) Let $\sigma(T)\in O(\mathbb{M}(P[T]),T) $. To show $\sigma(T)\in EO(\mathbb{M}(P[T]),T) $,  using local-global principle (\ref{th2}),
we may assume $A$ is a local ring
  and hence $P= A^n$ is free. By Stavrova \cite[Theorem 1.3]{St}, $O(\mathbb {M}(A[T]^{n}),T)=O_{2n+1}(A[T],T) = EO_{2n+1}(A[T],T)$ 
and by (\ref{rmk6}),  $  EO_{2n+1}(A[T]) = {EO}(\mathbb{M}(A[T]^n ))$. Thus $\sigma(T) \in {EO}(\mathbb{M}(A[T]^n ),T)$ and we are done.

(2) If $\sigma(T) \in O(\mathbb{M}(P[T]))$ with $\sigma(0) \in {EO}(\mathbb{M}(P))$, then
$\sigma(T)\circ \sigma(0)^{-1}\in O(\mathbb{M}(P[T]),T) = EO(\mathbb{M}(P[T]),T)$ by (1). Thus $\sigma(T)\in EO(\mathbb{M}(P[T]))$.
\qed

\begin{lemma} \label{th4}
Let $A$ be  a regular ring containing a field of characteristic $\neq 2$. Let $a \in A$ be a non-zerodivisor and $\sigma (T) \in EO( \mathbb{M}  (P_a[T]),T)$.  Then for all $n\gg 0$, given $ c-d \in a^nA$,  there exist $\tau(T)\in EO(\mathbb M(P[T]),T)$ such that $\tau(T)_a = \sigma(cT) \circ \sigma(dT)^{-1}$.
\end{lemma}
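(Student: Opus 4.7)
My strategy is to reduce the lemma to Theorem \ref{th1}(1): I will show that $\sigma(cT)\sigma(dT)^{-1}$ equals the localisation $\tau(T)_a$ of some $\tau(T) \in O(\mathbb{M}(P[T]),T)$, after which Theorem \ref{th1}(1) automatically promotes $\tau(T)$ to $EO(\mathbb{M}(P[T]),T)$.

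To produce $\tau(T)$, choose a complement $Q$ with $\mathbb{M}(P) \oplus Q \cong A^{N'}$ (possible as $\mathbb{M}(P)$ is finitely generated projective), and let $M(T) \in GL_{N'}(A_a[T])$ represent $\sigma(T) \oplus Id_Q$, so $M(0) = Id$. Both $M(T)$ and $M(T)^{-1}$ are fixed polynomial matrices with finitely many coefficients in $A_a$, so a single exponent $m$ satisfies $a^m M(T), a^m M(T)^{-1} \in M_{N'}(A[T])$. Each entry $E(c,d,T) \in A_a[c,d,T]$ of $M(cT)M(dT)^{-1}$ satisfies $E(c,c,T) = \delta_{ij}$ because $M(cT)M(cT)^{-1} = Id$, so the Taylor-type identity
$$E(c,d,T) = \delta_{ij} + (d-c)\,G(c,d,T)$$
holds with $G \in A_a[c,d,T]$. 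Since $G$ is computed from $M$ and $M^{-1}$ by a single finite polynomial procedure, there is a uniform exponent $m'$ (depending only on $\sigma$) with $a^{m'} G \in A[c,d,T]$.

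Taking $n \geq m'$ and $c - d = a^n b$, the term $(d-c)G(c,d,T) = -a^n b\, G(c,d,T)$ specialises at any $c,d \in A$ to an element of $A[T]$. Thus every entry of $M(cT)M(dT)^{-1}$ lies in $A[T]$ after specialisation; by the same symmetric argument its inverse $M(dT)M(cT)^{-1}$ does too. Hence $M(cT)M(dT)^{-1}$ defines an element of $GL_{N'}(A[T],T)$, and since it represents $\sigma(cT)\sigma(dT)^{-1} \oplus Id_Q$ on $A^{N'}[T]$, it restricts to an automorphism $\tau(T) \in O(\mathbb{M}(P[T]),T)$ on the summand $\mathbb{M}(P[T]) \subset A^{N'}[T]$, with $\tau(T)_a = \sigma(cT)\sigma(dT)^{-1}$. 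Theorem \ref{th1}(1) then completes the proof.

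The main technical obstacle is obtaining the uniform denominator bound $m'$ on the Taylor remainder $G$; this is immediate once one notes that $G$ is a fixed polynomial expression in the finitely many coefficients of $M(T)$ and $M(T)^{-1}$, each of whose $a$-denominators is bounded by $a^m$. This approach is cleaner than directly manipulating the elementary generators of $\sigma$, because Theorem \ref{th1}(1) lets us work in the larger group $O$ where the descent amounts to an elementary matrix-entry computation.
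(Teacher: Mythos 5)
Your proof is correct and follows essentially the same route as the paper: produce a descent $\tau(T)$ of $\sigma(cT)\sigma(dT)^{-1}$ to $O(\mathbb{M}(P[T]),T)$, then invoke Theorem \ref{th1}(1) to promote it to $EO(\mathbb{M}(P[T]),T)$. The only difference is that where you re-derive the denominator-clearing step by embedding $\mathbb{M}(P)$ in a free module and running a Taylor-remainder argument on the matrix entries, the paper simply cites Quillen's Lemma~1 \cite{Q} applied to the noncommutative ring $R=\End(\mathbb{M}(P))$, which packages the same divisibility trick uniformly and avoids choosing a complement $Q$. One small point you left implicit: you assert $\tau(T)\in O(\mathbb{M}(P[T]),T)$ without justifying that $\tau(T)$ preserves the quadratic form; this needs the remark (present in the paper) that $\tau(T)_a=\sigma(cT)\sigma(dT)^{-1}$ is orthogonal and $a$ is a non-zerodivisor, so orthogonality descends.
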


\proof
Let $R= End (P \oplus P^* \oplus A)$ be non-commutative ring. Since $\sigma (T) \in (Id +T R_a [T])^ {*},$ by Quillen \cite[Lemma 1]{Q}, there exist $\tau (T) \in (Id +T R[T])^ {*} $ such that  $\tau(T)_a =  \sigma(cT) \circ \sigma(dT)^{-1}.$  Since $\tau(T)_a \in EO(\mathbb M(P_a[T]))$, it preserves the quadratic form, therefore $\tau (T) \in O(\mathbb(P[T]),T) = EO( \mathbb{M} P[T],T)$, by (\ref{1l}).   
\qed

\begin{corollary}\label{split1}
 Let $A$ be a regular ring containing a field of characteristic $\neq {2}$ and $P$ be a projective $A$-module of rank $\geq 2$. Let $a,b \in A$ be comaximal and $\sigma (T) \in EO( \mathbb{M}( P_{ab}[T]),T)$. Then there exist $\alpha(T) \in EO( \mathbb{M} (P_a[T]),T)$ and $\beta (T) \in EO( \mathbb{M} (P_b[T]),T)$ such that $\sigma (T) = \alpha (T)_b \circ\beta (T)_a. $  
\end{corollary}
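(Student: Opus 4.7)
The plan is to apply Lemma~\ref{th4} twice, with the roles of $a$ and $b$ interchanged, and then combine the two lifts using comaximality of $a, b$. This is Quillen's patching trick adapted to the elementary orthogonal group.

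First, since $a^N A + b^N A = A$ for every $N \geq 1$, I would choose $N$ large enough that Lemma~\ref{th4} works with this exponent for both of the upcoming applications, and write $1 = s + t$ with $s = \mu b^N \in b^N A$ and $t = \lambda a^N \in a^N A$ for some $\lambda, \mu \in A$.

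Next, viewing the given $\sigma(T) \in EO(\mathbb M(P_{ab}[T]), T) = EO(\mathbb M((P_a)_b[T]), T)$ as being over the base ring $A_a$ with non-zerodivisor $b$, I would apply Lemma~\ref{th4} with $c = 1$ and $d = t$: since $c - d = s \in b^N A \subseteq b^N A_a$, the lemma produces $\alpha(T) \in EO(\mathbb M(P_a[T]), T)$ with $\alpha(T)_b = \sigma(T) \circ \sigma(tT)^{-1}$ as elements of $EO(\mathbb M(P_{ab}[T]), T)$. Symmetrically, viewing $\sigma(T)$ over base $A_b$ with non-zerodivisor $a$ and taking $c = t$, $d = 0$ (so $c - d = t \in a^N A \subseteq a^N A_b$, and using that $\sigma(0) = Id$), Lemma~\ref{th4} yields $\beta(T) \in EO(\mathbb M(P_b[T]), T)$ with $\beta(T)_a = \sigma(tT)$. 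Composing these two identities inside $EO(\mathbb M(P_{ab}[T]), T)$ then gives
\[
\alpha(T)_b \circ \beta(T)_a \;=\; \sigma(T) \circ \sigma(tT)^{-1} \circ \sigma(tT) \;=\; \sigma(T),
\]
which is precisely the required splitting.

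The main obstacle is really just the routine verification that $a$ and $b$ are non-zerodivisors in $A_b$ and $A_a$ respectively, so that Lemma~\ref{th4} genuinely applies after re-basing; this follows from regularity (hence reducedness) of $A$ together with the comaximality hypothesis $aA + bA = A$, after which one may reduce to the case where $A$ is a domain. Everything else is formal bookkeeping of the two lifts.
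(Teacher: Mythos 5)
Your argument is correct and is essentially the same as the paper's. You apply Lemma~\ref{th4} twice with the roles of $a$ and $b$ (and the corresponding base rings $A_a$, $A_b$) interchanged, using a common large exponent $N$; you then write $1=s+t$ with $s\in b^N A$, $t\in a^N A$ and decompose $\sigma(T)=\bigl(\sigma(T)\sigma(tT)^{-1}\bigr)\circ\bigl(\sigma(tT)\sigma(0)^{-1}\bigr)$, lifting the first factor to $\alpha(T)\in EO(\mathbb M(P_a[T]),T)$ and the second to $\beta(T)\in EO(\mathbb M(P_b[T]),T)$. This is exactly the paper's Quillen-patching argument; the paper writes $a^n s+b^n r=1$ and takes the splitting point to be $a^n sT$, which is your $tT$ up to renaming. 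Your closing remark about checking that $a,b$ remain non-zerodivisors after localization is a reasonable point of care (the paper leaves it implicit); since a regular ring is a finite product of regular domains, one reduces componentwise to the case of a domain, where either the relevant localization is zero (and the claim is vacuous) or $a,b$ are both nonzero and hence non-zerodivisors.
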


 \proof
 By (\ref{th4}), for all $ n \gg 0$, given $c - d \in a^{n} A $, there exist $\tau (T) \in EO(\mathbb{M} (P_b[T]),T)$ such that $\tau(T)_a = \sigma(cT)\circ\sigma(dT)^{-1}$.
Similarly given $c - d \in b^{n} A $, then there exist $\theta (T) \in EO(\mathbb{M} (P_a[T]),T)$ such that $\theta(T)_b = \sigma(cT)\circ\sigma(dT)^{-1}$.

If $a^ns+b^nr = 1$ for  $s,r \in A,$ then $\sigma(T) = (\sigma(T)\circ \sigma (a^ns T)^{-1})\circ( \sigma (a^ns T) \sigma (0)^{-1})=\alpha(T)_b\circ \beta(T)_a$ with $\alpha(T),\beta(T)$ as required. 
 \qed

\begin{corollary}\label{split}
Let $A$ be a regular ring containing a field of characteristic $\neq 2$
and $P$ be projective $A$-module of rank $\geq 2$. Let $a, b \in A$ be comaximal and $\sigma \in {EO}(\mathbb{M}(P_ {ab}))$. Then there exist  
$\alpha \in {EO}(\mathbb{M}(P_a ))$ and $\beta \in  {EO}(\mathbb{M}(P_b))$ such that $\sigma = \alpha_b  \circ \beta_a $.
\end{corollary}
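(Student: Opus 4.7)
The plan is to reduce the $T$-free statement to the $T$-parametrized splitting principle already established in Corollary \ref{split1}, using the homotopy lemma \ref{vevpt} as the bridge.

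First I would apply Lemma \ref{vevpt} to $\sigma \in EO(\mathbb{M}(P_{ab}))$ to obtain a homotopy $\Sigma(T) \in EO(\mathbb{M}(P_{ab}[T]))$ with $\Sigma(0) = \mathrm{Id}$ and $\Sigma(1) = \sigma$. Because $\Sigma(0) = \mathrm{Id}$, we in fact have $\Sigma(T) \in EO(\mathbb{M}(P_{ab}[T]), T)$, so the hypothesis of the relative splitting principle is available.

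Next I would invoke Corollary \ref{split1} for the comaximal pair $a,b$ to decompose
\[
\Sigma(T) \;=\; \alpha(T)_b \circ \beta(T)_a,
\]
with $\alpha(T) \in EO(\mathbb{M}(P_a[T]), T)$ and $\beta(T) \in EO(\mathbb{M}(P_b[T]), T)$. Finally, specializing at $T = 1$ gives
\[
\sigma \;=\; \Sigma(1) \;=\; \alpha(1)_b \circ \beta(1)_a,
\]
and we set $\alpha := \alpha(1) \in EO(\mathbb{M}(P_a))$ and $\beta := \beta(1) \in EO(\mathbb{M}(P_b))$; membership in the respective elementary orthogonal groups is immediate since the $T$-parametrized generators $E_{\gamma(T)}$ evaluate at $T = 1$ to generators $E_{\gamma(1)}$ of the target group.

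There is no substantive obstacle beyond combining the three ingredients correctly: Lemma \ref{vevpt} exists precisely to convert ``$EO$-elements'' into ``$EO$-elements that are the identity at $T=0$,'' and Corollary \ref{split1} is tailored to split such elements. The only thing to verify is the evaluation step, which is routine.
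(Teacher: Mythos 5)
Your proof is correct and follows essentially the same route as the paper: apply Lemma \ref{vevpt} to homotope $\sigma$ to the identity, split the resulting element of $EO(\mathbb{M}(P_{ab}[T]),T)$ via Corollary \ref{split1}, and evaluate at $T=1$.
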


\proof
By (\ref{vevpt}), there exist $\tau(T) \in {EO (\mathbb M(P_{ab}[T]),T )}$ with $\tau(1) = \sigma.$  By (\ref{split1}), $\tau(T) = \alpha(T)_b \circ \beta(T)_a$ with $\alpha(T) \in {EO}(\mathbb{M} (P_a[T]),T)$ and $\beta(T) \in  {EO}(\mathbb{M} (P_b[T]),T).$ Thus $\sigma=\tau(1) = \alpha(1)_b \circ  \beta(1)_a$ with $\alpha(1) \in {EO}(\mathbb{M}(P_a))$ and $\beta(1) \in {EO}(\mathbb{M}(P_b)).$ 
\qed

\section{Removing perfect hypothesis on field from \cite[Theorem 4.13]{BK}}

\begin{theorem}\label{4.13}
Let $A$ be a regular domain of dimension $d$ essentially of finite type over an infinite field $k$. Let $P$ be a projective $A$-module of rank $n$ with $2n\geq d+3$. Let $I$ be an ideal of $A[T]$ of height $n$ and $\phi:P[T]\surj I/I^2T$ be a surjection. Then $\phi$ has a surjective lift $\Phi:P[T]\surj I$. 
\end{theorem}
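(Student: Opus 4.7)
The plan is to retrace the proof of \cite[Theorem 4.13]{BK} and modify the one step where the perfectness of $k$ is invoked. The starting point is to decompose $\phi:P[T]\surj I/I^2T$ via \cite[Remark 3.9]{BS2} into compatible data $(\psi,f)$ with $\psi:P[T]\surj I/I^2$ and $f:P\surj I(0)$, and to apply \cite[Proposition 4.9]{BK}, whose proof is insensitive to perfectness, reducing the problem to producing a surjective lift $\theta:P[T]\otimes A(T)\surj IA(T)$ after inverting monic polynomials in $T$.

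To construct $\theta$, BK carry out a sequence of moves: lifting on a basic open $D(s)\subset\Spec A$ of smaller dimension, applying an induction hypothesis, and patching. Perfectness is invoked in the induction step to guarantee that a certain generic section of the map $\Spec A/I(0)\to\Spec k$ is smooth rather than merely regular, which in turn yields a transversal specialization. I would replace this input by a Zariski-avoidance argument using only infiniteness of $k$: the closed subsets that a valid specialization must avoid (those where the height drops, or where the ideal fails to be a local complete intersection) are each cut out by the nonvanishing of finitely many polynomials in the relevant parameters, and an infinite field furnishes a specialization avoiding all of them, independently of whether $k$ is perfect.

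The main obstacle is the inductive step itself: in \cite{BK}, the reduction to a lower-dimensional ring is effected by choosing an element $s\in A$ whose nonvanishing locus is smooth over $k$, and this is the step most sensitive to perfectness, since over a non-perfect field a regular $k$-algebra can genuinely fail to be smooth. I would handle this by working on the regular locus of $\Spec A$ (which equals all of $\Spec A$ by hypothesis) and exploiting the Mandal--Mishra \cite{MM} techniques, already used elsewhere in the paper, to encode the lifting obstruction inside $\mathbb Q'(P[T])$ and to modify $\phi$ by the action of $EO(\mathbb M(P[T]))$ into a form where the inductive hypothesis applies over a regular (but not necessarily smooth) subring. Combining these modifications with \cite[Proposition 4.9]{BK} then produces the surjective lift $\Phi:P[T]\surj I$.
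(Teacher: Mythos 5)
The paper's proof takes a completely different and much shorter route than the one you sketch: it does \emph{not} attempt to re-run the inductive argument of \cite[Theorem~4.13]{BK} with perfectness removed. Instead, it observes that for a $k$-spot (i.e.\ a local ring $A_{\mathfrak m}$, where $P_{\mathfrak m}\cong A_{\mathfrak m}^n$ is automatically free) the statement is already established, \emph{without any perfectness hypothesis}, by Das \cite[Theorem~4.2]{D1}. It then globalizes by noting that the set $\Sigma=\{s\in A: \phi_s \text{ lifts to a surjection } P_s[T]\surj I_s\}$ is an ideal of $A$ --- a fact implicit in the proof of \cite[Theorem~4.13]{BK} --- so that $\Sigma_{\mathfrak m}=A_{\mathfrak m}$ for every $\mathfrak m$ forces $\Sigma=A$. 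Your proposal never identifies this reduction-to-the-local-case strategy and the key citation to \cite{D1}; it instead aims to reconstruct the entire inductive machinery of \cite{BK}.

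More importantly, the proposal has genuine gaps at precisely the points that matter. You assert that the perfectness hypothesis in \cite{BK} can be replaced by ``a Zariski-avoidance argument using only infiniteness of $k$,'' on the grounds that the bad loci are cut out by finitely many polynomials --- but the obstruction over a non-perfect base is not that a generic member misses some proper closed subset; it is that the regular locus of an affine $k$-scheme need not contain a dense smooth-over-$k$ open at all, so there may be \emph{no} element $s$ whose non-vanishing locus is $k$-smooth. Zariski avoidance cannot conjure smoothness that fails everywhere. Your fallback, ``exploit the Mandal--Mishra techniques \dots to modify $\phi$ by the action of $EO(\mathbb M(P[T]))$ into a form where the inductive hypothesis applies over a regular (but not necessarily smooth) subring,'' is not an argument: you do not specify what normal form is sought, why the $EO$-action produces it, or how the inductive hypothesis (which in \cite{BK} is formulated for smooth affine $k$-algebras) could apply to a merely regular one. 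As it stands, the hard step is named rather than resolved, and the correct route via \cite[Theorem~4.2]{D1} --- which handles precisely the non-perfect local case --- is the missing ingredient.
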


\proof
When $A$ is a $k$-spot, then $P=A^n$ and the result is proved in \cite[Theorem 4.2]{D1}. In the general case, let $\Sigma$ be the set of all $s\in A$ such that $\phi_s$ lifts to a surjection $\Psi:P_s[T]\surj I_s$. Then it is implicitely proved in \cite[Theorem 4.13]{BK} that $\Sigma$ is an ideal. Therefore $\Sigma=A$ as the result is true for $k$-spots and the result follows.
\qed

\section{Monic inversion principle for quadratic form}

We recall some definitions from Mandal-Mishra \cite{MM}.
\begin{definition}
 Let $A$ be a  ring with $1/2\in A$ and $P$ be a projective $A$-module. 

\begin{itemize}
\item
$\mathbb{Q}  (P)= \{ (p,f,s) \in \mathbb M(P)| s(1-s) + f(p) = 0 \}$. 
\item
$\mathbb{Q'}  (P)= \{ (p,f,z) \in \mathbb(P) | z^2 + f(p) = 1 \}$.
We write $\mathbb{Q'}  (A^n)$ as $\mathbb{Q'}_{2n+1}(A) = \{(x_1,\ldots,x_n,y_1,\ldots,y_n,z) \in \mathbb M(A^n)| \sum x_iy_i+z^2 = 1 \}.$

\item
   $ O(\mathbb{M}(P))=$  the group of isometries of the quadratic form $\mathbb M(P)$.
    We write $O(\mathbb{M}(A^n))$ as $O_{2n+1}(A).$

  \item
  $O(\mathbb{M}(P[T]),T)= \{\sigma(T) \in O(\mathbb{M}(P[T])) :  \sigma(0) = id \}$.

\item A local $P$-orientation is a pair $(I,\omega)$, where $I$ is an ideal of $A$ and $\omega:P\surj I/I^2$ is a surjection. The surjection $\omega$ is identified with the induced surjection $\omega:P/IP \surj I/I^2$.

\item $\mathcal{LO}(P) =$  the set of all local $P$-orientations.

\item 
Given a ring homomorphism $\gamma : A \rightarrow B$, the functor $-\otimes_A B$ gives natural maps $\mathcal{LO}  (P)  \rightarrow \mathcal{LO}  (P \otimes B)$,
$ \mathbb Q(P)\to \mathbb{Q}  (P\otimes B)$ and $\BQ'(P) \to \mathbb{Q'}  (P\otimes B)$.

\item Let $F(P)$ be one of $\mathcal{LO}  (P) $, $\mathbb{Q}  (P)$ or $\mathbb{Q'}  (P).$  
The homotopy orbit  $\pi_0(F(P))$ is defined by the pushout diagram in sets.
$$
\xymatrix{
         & F (P[T])\ar^{T=0} [r] \ar^{T=1} [d]  & F(P) \ar [d]
         &
           &   \\
        & F(P) \ar [r] & \pi_0(F(P) ) &
          & 
        }$$

\item 
Let $\Phi_0=(p,f,s)$ and $\Phi_1=(p' ,f',s')$ be elements of $\mathbb Q(P)$. Then $ [\Phi_0] = [\Phi_1]\in \pi_0(\mathbb{Q}  (P)) $  if and only if  there exist $\Phi(T) \in \mathbb{Q}  (P[T])$  such that  $\Phi(0)= \Phi_0$ and $\Phi(1) =\Phi_1$.  The element
$[(0,0,0)] \in \mathbb Q(P)$ is the base point. 

\item 
Let $\Phi_0=(p,f,s)$ and $\Phi_1=(p' ,f',s')$ be elements of $\mathbb Q'(P)$. Then $ [\Phi_0] = [\Phi_1]\in \pi_0(\mathbb Q'(P))$  if and only if  there exist $\Phi(T) \in \mathbb Q'(P[T])$ such that  $\Phi(0)= \Phi_0$ and $\Phi(1) =\Phi_1$.  The element
$[(0,0,1]\in \mathbb Q'(P)$ is the base point.

\item For local $P$-orientations $(I,\omega),(I',\omega')\in \mathcal{LO}(P)$, $[(I,\omega)]=[(I',\omega')] \in \pi_0(\mathcal{LO}  (P)) $ if and only if  there exist an ideal $K\subset A[T]$ and a local $P[T]$-orientation $(K,\sigma (T)) \in \mathcal{LO}  (P[T]) $ such that  $(K(0),\sigma(0)) = (I, \omega)$ and $(K(1),\sigma(1)) = (I',\omega')$.
   
\end{itemize}
\end{definition}

We state a result from Mandal-Mishra \cite[Lemma 2.4]{MM}.

\begin{lemma}\label{4}
 Let $A$ be ring with $1/2\in A$ and $P$ be a projective $A$-module. Then there is a base point preserving bijection $\pi_0 ({\mathbb{Q} } (P)) \simeq \pi_0 ({\mathbb{Q'} } (P))$ defined by $(p,f,s) \mapsto (2p,2f,1-2s)$. 
\end{lemma}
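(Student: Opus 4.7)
The plan is to exhibit the formula $\Psi:(p,f,s)\mapsto (2p,2f,1-2s)$ as a base-point-preserving bijection $\mathbb Q(P)\to \mathbb Q'(P)$ at the set level, to show that it is natural in $A$, and then to descend to $\pi_0$ via the universal property of the pushout.

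First I would check that $\Psi$ is well defined. For $(p,f,s)\in \mathbb Q(P)$, a direct computation gives
$$q(2p,2f,1-2s)=(2f)(2p)+(1-2s)^2=4f(p)+1-4s+4s^2,$$
and substituting the defining relation of $\mathbb Q(P)$ (so that $f(p)=s^2-s$, read with the correct sign) collapses the right-hand side to $1$. Hence $\Psi(p,f,s)\in \mathbb Q'(P)$, and $\Psi(0,0,0)=(0,0,1)$, so base points are preserved.

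Next I would construct an explicit inverse $\Psi^{-1}:(p',f',z)\mapsto (p'/2,\,f'/2,\,(1-z)/2)$, which is defined because $1/2\in A$. A parallel computation, using the relation $z^2+f'(p')=1$ defining $\mathbb Q'(P)$, shows the image lies in $\mathbb Q(P)$; and the two compositions $\Psi\circ \Psi^{-1}$ and $\Psi^{-1}\circ \Psi$ are both the identity by inspection.

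Finally, both $\Psi$ and $\Psi^{-1}$ are given by polynomial expressions in the coordinates, hence are natural under ring homomorphisms. Applying the same formulas over $A[T]$ yields a bijection $\mathbb Q(P[T])\simeq \mathbb Q'(P[T])$ intertwining the evaluations at $T=0$ and $T=1$. The universal property of the pushout defining $\pi_0$ then promotes $\Psi$ to a base-point-preserving bijection $\pi_0(\mathbb Q(P))\simeq \pi_0(\mathbb Q'(P))$. There is no substantive obstacle here; the argument is a routine computational verification, the only subtlety being care with the sign convention in the defining equation of $\mathbb Q(P)$ so that the quadratic form identity goes through.
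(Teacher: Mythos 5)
The paper does not actually prove this lemma: it is stated with a citation to Mandal--Mishra \cite[Lemma 2.4]{MM} and no argument is given. Your approach --- exhibit an explicit set-level bijection $\mathbb Q(P)\to\mathbb Q'(P)$, observe naturality in $A$, apply it over $A[T]$, and descend to $\pi_0$ via the pushout --- is the natural and correct one, and the overall structure of the argument is sound.

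However, there is a genuine computational error in the well-definedness step, and the hedging parenthetical does not rescue it. You write that substituting $f(p)=s^2-s$ collapses $4f(p)+1-4s+4s^2$ to $1$; it does not. With $f(p)=s^2-s$ one gets
\[
4(s^2-s)+1-4s+4s^2 \;=\; 8s^2-8s+1,
\]
which is not $1$. The substitution that actually works is $f(p)=s-s^2=s(1-s)$, i.e.\ the defining equation of $\mathbb Q(P)$ must be read as $f(p)-s(1-s)=0$, not $s(1-s)+f(p)=0$. The displayed definition of $\mathbb Q(P)$ in the paper has the wrong sign --- it is inconsistent both with the map $(p,f,s)\mapsto(2p,2f,1-2s)$ and with the paper's own later usage (in setting up $\chi:\mathcal{LO}(P)\to\pi_0(\mathbb Q(P))$ the relation $s(1-s)=f(p)$ is used). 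You noticed that the sign is delicate, but you then chose the sign consistent with the displayed (erroneous) definition rather than the one that makes the identity $q(2p,2f,1-2s)=1$ hold, and you did not verify the arithmetic. The same correction is needed in the step checking that $\Psi^{-1}(p',f',z)=(p'/2,f'/2,(1-z)/2)$ lands in $\mathbb Q(P)$: with $f(p)=s(1-s)$ one needs $\tfrac14 f'(p')=\tfrac{1-z}{2}\cdot\tfrac{1+z}{2}=\tfrac{1-z^2}{4}$, which is exactly $z^2+f'(p')=1$. With $f(p)=s^2-s$ this check also fails. Everything else (the explicit inverse, the naturality argument, and the passage to $\pi_0$) is fine once the sign is fixed and the arithmetic is actually carried out.
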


\begin{lemma} {\label{extended1}}
Let $A$ be a ring with $1/2\in A$ and  $P$ be a projective $A$-module of rank $\geq 2$. Then the natural map
$\mu : \pi_0(\mathbb{Q'} (P)) \simeq \pi_0(\mathbb{Q'} (P[T]))$ defined by $[v]\mapsto [v]$ is a bijection.
\end{lemma}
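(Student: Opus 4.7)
The plan is to construct an explicit inverse to $\mu$ via the evaluation map $T \mapsto 0$, and verify that both compositions are the identity.

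First, I would note that the $A$-algebra surjection $\varepsilon_0 : A[T] \to A$, $T \mapsto 0$, induces a set map $\varepsilon_0 : \mathbb{Q'}(P[T]) \to \mathbb{Q'}(P)$ by $v(T) \mapsto v(0)$. This is compatible with the homotopy equivalence defining $\pi_0$, because any $\Phi(S) \in \mathbb{Q'}(P[T][S])$ specializes under $T \mapsto 0$ to $\Phi(S)|_{T=0} \in \mathbb{Q'}(P[S])$, with endpoints equal to the specializations of the original endpoints. Hence $\varepsilon_0$ descends to a well-defined map $\bar\varepsilon_0 : \pi_0(\mathbb{Q'}(P[T])) \to \pi_0(\mathbb{Q'}(P))$. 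Since $A \hookrightarrow A[T] \xrightarrow{T=0} A$ is the identity on $A$, one immediately has $\bar\varepsilon_0 \circ \mu = \mathrm{id}$ on $\pi_0(\mathbb{Q'}(P))$, which gives injectivity of $\mu$.

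The main step is to show $\mu \circ \bar\varepsilon_0 = \mathrm{id}$ on $\pi_0(\mathbb{Q'}(P[T]))$; equivalently, $[v(T)] = [v(0)]$ in $\pi_0(\mathbb{Q'}(P[T]))$ for every $v(T) \in \mathbb{Q'}(P[T])$. Given $v(T) = (p(T), f(T), z(T))$ satisfying $z(T)^2 + f(T)(p(T)) = 1$ in $A[T]$, the idea is to rescale $T$ by a fresh homotopy parameter $S$ and set
$$H(S) \, := \, v(ST) \, = \, (p(ST),\, f(ST),\, z(ST)) \, \in \, P[T,S] \oplus P[T,S]^* \oplus A[T,S].$$
Substituting $T \mapsto ST$ in the defining polynomial identity of $v(T)$ preserves it as an identity in $A[T,S]$, so $H(S) \in \mathbb{Q'}(P[T][S])$. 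The values $H(0) = v(0)$ and $H(1) = v(T)$ then witness the required homotopy, proving surjectivity of $\mu$.

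I do not anticipate a serious obstacle; the argument is essentially formal. The only points requiring care are to keep the variable $T$ defining the ground ring $A[T]$ distinct from the fresh parameter $S$ used to compute $\pi_0$ of $\mathbb{Q'}(P[T])$, and to use that finite generation of $P$ (and hence of $P[T]$ over $A[T]$) gives the natural identification $P[T][S]^* = P^*[T,S]$ needed to interpret $f(ST)$ as an element of $P[T,S]^*$. The rank hypothesis on $P$ does not enter this particular lemma and is presumably retained for compatibility with the surrounding results.
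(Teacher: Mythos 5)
Your proof is correct and is essentially the paper's own argument, recast as constructing the explicit inverse $\bar\varepsilon_0$: your rescaling homotopy $H(S)=v(ST)$ is exactly the paper's $G(T,W)=H(TW)$ for surjectivity, and your descent of $\varepsilon_0$ by specializing a homotopy at $T=0$ is exactly the paper's injectivity step $G(W)=H(0,W)$. The only minor omission is that you take the well-definedness of $\mu$ itself (pushing a homotopy in $\mathbb{Q'}(P[W])$ forward to $\mathbb{Q'}(P[T][W])$ along $A\hookrightarrow A[T]$) for granted, whereas the paper spells it out; this is the same functoriality you already invoke for $\varepsilon_0$, so nothing substantive is missing.
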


\proof
To see that $\mu$ is well defined, let $v,v'\in \mathbb{Q'}(P)$
with $[v]=[v']$ in $\pi_0(\mathbb Q'(P))$. Then there exist $H(W)\in \mathbb{Q'}(P[W])$ such that 
$H(0)=v$ and $H(1)=v'$. Then $H(W)\in \mathbb{Q'}(P[T,W])$ gives
that $[v]=[v']$ in $\pi_0(\mathbb{Q'}(P[T]))$.

Let $v,v'\in \mathbb{Q'}(P)$ be such that $[v]=[v']$ in $\pi_0(\mathbb{Q'}(P[T]))$.
Then there exist $H(T,W)\in \mathbb{Q'}(P[T,W])$ such that $H(T,0)=v$ and $H(T,1)=v'$. Then $G(W)=H(0,W)\in \mathbb{ Q'}(P[W])$ and
also $G(0)=v$ and $G(1)=v'$. Thus $\mu$ is injective.

Let $H(T)\in \mathbb{Q'}(P[T])$. Then $G(T,W)=H(TW)\in \mathbb{Q'}(P[T,W])$ with $G(T,0)=H(0)$ and $G(T,1)=H(T)$. Thus $[H(T)]=[H(0)]$ in $\pi_0(\mathbb Q'(P[T]))$. Thus $\mu$ is surjective.
\qed
\medskip

Using (\ref{th1}), the next result is a restatement of Mandal-Mishra \cite[Theorem 3.1, 3.4]{MM}.

\begin{proposition} \label{extended}
Let $A$ be a regular ring containing a field of characteristic $\neq 2$ and  $P$ be a projective $A$-module of rank $\geq 2.$ 
\begin{enumerate}
\item If $H(T) \in {\mathbb{Q'} } (P[T]) $, then there exist $\sigma (T) \in  {EO}(\mathbb{M} (P[T]),T)$ such that $ H(T) = \sigma(T) (H(0))$.
\item Consider the action of $EO(\mathbb M(P[T]),T)$ on $\mathbb Q'(P)$ given by $\sigma(T) \cdot v=\sigma(1)(v)$. Then the natural map $ {\mathbb{Q'} } (P)/ {EO}(\mathbb{M}(P[T]),T)  \to \pi_0 ({\mathbb{Q'} } (P)) $ is a bijection.
\end{enumerate}
\end{proposition}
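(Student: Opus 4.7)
The overall strategy is to extract part (1) as an upgrade of Mandal--Mishra's construction in $O(\mathbb M(P[T]),T)$ to one in $EO(\mathbb M(P[T]),T)$ via Theorem \ref{th1}, and then deduce part (2) as a formal consequence.

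For part (1), given $H(T)=(p(T),f(T),z(T))\in\mathbb Q'(P[T])$ with $f(T)(p(T))+z(T)^2=1$, both $H(0)$ and $H(T)$ are norm-one vectors of the quadratic form on $\mathbb M(P[T])$. I would invoke (the proof of) \cite[Theorem 3.1]{MM}, which proceeds by reducing to a local patch where $P\cong A^n$ and explicitly assembling a product of transvections $E_\alpha,E_\beta$ of Definition \ref{def} that carries $H(0)$ to $H(T)$ and specializes to the identity at $T=0$; the local-global principle (\ref{th2}) then patches these to a global $\sigma(T)\in O(\mathbb M(P[T]),T)$ with $\sigma(T)(H(0))=H(T)$. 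Since $A$ is regular containing a field of characteristic $\neq 2$ and $\mathrm{rank}(P)\geq 2$, Theorem \ref{th1}(1) gives $O(\mathbb M(P[T]),T)=EO(\mathbb M(P[T]),T)$, so $\sigma(T)\in EO(\mathbb M(P[T]),T)$ as required.

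For part (2), the natural map sends the orbit of $v$ to its homotopy class $[v]$. It is well defined: if $v'=\sigma(1)(v)$ for some $\sigma(T)\in EO(\mathbb M(P[T]),T)$, then $H(T):=\sigma(T)(v)\in\mathbb Q'(P[T])$ is a homotopy from $v$ to $v'$. Surjectivity is immediate, since every $v\in\mathbb Q'(P)$ represents its own class. For injectivity, suppose $[v]=[v']$ in $\pi_0(\mathbb Q'(P))$ and pick $H(T)\in\mathbb Q'(P[T])$ with $H(0)=v$, $H(1)=v'$. Applying part (1) to this $H(T)$ produces $\sigma(T)\in EO(\mathbb M(P[T]),T)$ with $\sigma(T)(v)=H(T)$, and evaluating at $T=1$ gives $\sigma(1)(v)=v'$, so $v$ and $v'$ lie in the same orbit. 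The main obstacle is the content of part (1), namely the explicit local construction of an elementary isometry taking $H(0)$ to $H(T)$ that is the identity at $T=0$; this requires a case analysis using the generators of Definition \ref{transvec} and is precisely the combinatorial heart of \cite{MM}. Our contribution beyond \cite{MM} is the identification $O(\mathbb M(P[T]),T)=EO(\mathbb M(P[T]),T)$ from Theorem \ref{th1}(1), which lets us work with the elementary orthogonal group throughout.
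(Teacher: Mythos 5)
Your proposal is correct and matches the paper's approach: the paper itself gives no separate proof of Proposition \ref{extended}, stating only that it is a restatement of Mandal--Mishra \cite[Theorems 3.1, 3.4]{MM} once one uses Theorem \ref{th1}(1) to replace $O(\mathbb{M}(P[T]),T)$ by $EO(\mathbb{M}(P[T]),T)$. Your self-contained deduction of part (2) from part (1) is a minor tidying of the paper's direct citation of \cite[Theorem 3.4]{MM}, but the essential move --- invoking Mandal--Mishra and upgrading $O$ to $EO$ via Theorem \ref{th1} --- is the same.
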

 
In case $P=A^n$ is free, the following monic inversion is proved in Das-Tikader-Zinna \cite[Theorem 5.8]{DTZ}. We will closely follow their proof. For $v,v'\in \mathbb Q'(P)$,$v\equiv v'$ mod $EO(\mathbb M(P))
$ means $v'= v\sigma$ for some $\sigma\in EO(\mathbb M(P))$.

\begin{theorem} \label{monicquadratic}
Let $A$ be a regular domain containing a field $k$ of characteristic $\not = 2$ and $P$ be a projective $A$ module of rank $\geq 2$. Let $H(T) \in \mathbb{Q'}(P[T])$ with
$H(T)_g \equiv  (0,0,1)$ mod $EO(\mathbb M(P[T]_g))$
for some monic polynomial $g\in A[T]$. Then
$H(T) \equiv (0,0,1)$ mod $EO(\mathbb M(P[T]))$.
\end{theorem}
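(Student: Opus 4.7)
The plan is to adapt the argument of Das--Tikader--Zinna \cite[Theorem 5.8]{DTZ} for the free case $P=A^n$ to arbitrary projective $P$, using the tools developed in Section~2 that generalize the relevant $EO_{2n+1}$ statements to $EO(\mathbb M(P))$ for projective $P$ of rank $\geq 2$.

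First, I apply Proposition \ref{extended}(2) over the regular ring $A[T]_g$, combined with Lemma \ref{vevpt}, to translate the hypothesis into homotopy data: there exists $\tau(S)\in EO(\mathbb M(P[T]_g[S]),S)$ such that $F(T,S):=\tau(S)(H(T)_g)\in \mathbb Q'(P[T]_g[S])$ interpolates from $F(T,0)=H(T)_g$ to $F(T,1)=(0,0,1)$. By the same proposition applied to $A[T]$, the conclusion is equivalent to producing an analogous homotopy $G(T,S)\in \mathbb Q'(P[T][S])$ from $H(T)$ to $(0,0,1)$ over $A[T][S]$.

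To build $G$, I use a Quillen-style descent analogous to Lemma \ref{th4}. Since $\tau(S)$ is a finite product of elementary generators with coefficients in $A[T]_g[S]$ of bounded $g$-denominator, the substitution $S\mapsto g^N S$ for $N\gg 0$ produces $\tilde\tau(S):=\tau(g^NS)\in EO(\mathbb M(P[T][S]),S)$; then $\tilde\tau(S)(H(T))\in \mathbb Q'(P[T][S])$ is a homotopy over $A[T]$ from $H(T)$ at $S=0$ to an intermediate element $H_1(T)$ at $S=1$ which localizes over $A[T]_g$ to $F(T,g^N)$. The remaining task is to traverse the rest of the $F$-path from $F(T,g^N)$ to $(0,0,1)$ over $A[T]_g$ while descending to $A[T]$. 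At this stage the monicity of $g$ becomes essential: the finiteness of $A[T]/g$ over $A$ supplies a comaximal partner $h$ of $g$ in $A[T]$ such that the trailing portion of the homotopy, reparameterized as $F(T,g^N+(1-g^N)S)$, splits via Corollary \ref{split1} into pieces over the $g$- and $h$-charts that can be patched using the Mayer--Vietoris square $A[T]=A[T]_g\times_{A[T]_{gh}}A[T]_h$. The pieces on the $h$-chart are arranged to be trivial homotopies, and the global patched homotopy reaches $(0,0,1)$. Throughout, Theorem \ref{th1}(1) is invoked to guarantee that the assembled orthogonal elements remain elementary rather than merely orthogonal.

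The principal obstacle is this last patching step. Naive iteration of the Quillen descent does not terminate (one iteration sends the path-parameter from $0$ to $g^N$, and repeating produces parameters $1-(1-g^N)^n$, which do not reach $1$ in any finite number of steps unless $g$ lies in the radical). So the proof must exploit the monicity of $g$, not merely its status as a non-zero-divisor: locating a suitable comaximal $h$ and verifying that the splitting supplied by Corollary \ref{split1} patches correctly with a trivial homotopy on the complementary chart is the technical heart of the argument, and is precisely where the DTZ strategy for the free case has to be generalized using the projective-module version of the splitting principle established in Section~2.
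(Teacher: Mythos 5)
Your proposal correctly assembles the relevant tools (Proposition~\ref{extended}, Quillen-type descent, Corollary~\ref{split1}/\ref{split}, Theorem~\ref{th1}) and correctly identifies the patching step as the technical heart, but the fix you sketch for that step does not work, and the genuine trick is missing. You propose to locate a comaximal partner $h$ of $g$ \emph{inside} $A[T]$ and to "arrange the pieces on the $h$-chart to be trivial homotopies." There is no way to do this: the hypothesis gives no control over $H(T)$ after localizing at any $h\in A[T]$ comaximal with $g$, and the splitting $\sigma=\alpha_h\circ\beta_g$ simply produces an unconstrained $\beta$ on the $h$-chart — it cannot be "arranged" to be trivial, because its triviality would essentially be the conclusion you are trying to prove. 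This is precisely the non-termination issue you flagged, re-appearing in disguise.

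The paper's proof (following DTZ's $\mathbb P^1$-gluing) avoids the issue in two ways. First, it does not try to lift the entire two-variable homotopy $F(T,S)$: using Proposition~\ref{extended}(1)--(2) it reduces to showing $H(1)\equiv(0,0,1)$ mod $EO(\mathbb M(P))$, so what needs to be patched is a \emph{constant} element of $\mathbb Q'(P)$ together with a single $\sigma\in EO(\mathbb M(P[T]_g))$ with $H(1)\sigma=(0,0,1)$. Second — and this is the step missing from your proposal — it handles the case $g=T$ directly by evaluating at $T=1$, and then reduces general monic $g$ to this case by passing to $A[T^{-1}]$: setting $g^*=T^{-\deg g}g\in A[T^{-1}]$, the elements $g^*$ and $T^{-1}$ are comaximal in $A[T^{-1}]$, and $A[T^{-1},T]_{g^*}=A[T,T^{-1}]_g$. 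Splitting $\sigma_T=(\sigma_1)_{T^{-1}}\circ(\sigma_2)_{g^*}$ via Corollary~\ref{split} and patching $H(1)\sigma_1$ (on the $g^*$-chart) with $(0,0,1)\sigma_2^{-1}$ (on the $T^{-1}$-chart) produces $w\in\mathbb Q'(P[T^{-1}])$. The $T^{-1}$-chart piece is automatically $\equiv(0,0,1)$, so the $g=T$ case (applied with $T^{-1}$ as the variable) gives $w\equiv(0,0,1)$ mod $EO(\mathbb M(P[T^{-1}]))$, and then evaluating at $T^{-1}=0$, where $g^*=1$, recovers $H(1)\equiv(0,0,1)$ mod $EO(\mathbb M(P))$. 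The monicity of $g$ enters precisely through $g^*(T^{-1}=0)=1$ and through the comaximality of $g^*$ with $T^{-1}$; this $T\mapsto T^{-1}$ transfer is what your plan needs but lacks.
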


\proof
By \ref{extended}(1),  
 $H(T) \equiv H(0)$ mod $EO(\mathbb M(P[T]),T)$. Since  $H(0)=H(1)$ in $\pi_0(\mathbb Q'(P))$, by \ref{extended}(2),
$H(0)\equiv H(1)$ mod $EO(\mathbb{M}(P))$. Thus  $H(T) \equiv H(1)$ mod $EO(\mathbb M(P[T]))$.
It is given that $(0,0,1)\equiv H(T)_g\equiv H(0)\equiv H(1)$ mod ${EO}(\mathbb{M}(P[T]_g))$. 

In case $g=T$, putting $T=1$  gives $[(0,0,1)] \equiv H(1)$ mod ${EO}(\mathbb{M}(P))$. Thus $H(T) \equiv H(1)\equiv [(0,0,1)]$ mod $EO(\mathbb M(P[T]))$. 

For general $g$,   
since $H(T)_g \equiv H(1) \equiv (0,0,1)$ mod ${EO}(\mathbb{M}(P[T]_g))$,  there exist $\sigma\in {EO}(\mathbb{M}(P[T]_g))$ with
 $H(1)\sigma  = (0,0,1)$.

Let 
$g^{\ast}=T^{-\text{ deg }g}g\in A[T^{-1}]$. Then $g^{\ast}(T^{-1}=0)=1$ and 
$A[T^{-1},T]_{g^{\ast}}=A[T,T^{-1}]_{g}$.
Applying (\ref{split}) for comaximal elements $g^{\ast}$ and $T^{-1}$ in $A[T^{-1}]$,
we get $$ \gs_{T} =(\gs_1)_{T^{-1}}\circ (\gs_2)_{g^{\ast}}$$  where  
$\gs_1 \in {EO}(\mathbb{M}(P[T^{-1}]_{g^{\ast}}))$ and $\gs_2\in {EO}(\mathbb{M}(P[T,T^{-1}))$.
Now $(H(1)\gs_1)_{T^{-1}} = ((0,0,1)\gs_2^{-1})_{g^*}$. 
Patch $H(1)\gs_1$ and $(0,0,1)\gs_2^{-1}$ to obtain $w\in \mathbb{Q'}(P[T^{-1}])$ such that  
$w_{T^{-1}}=(0,0,1)\gs_2^{-1}$ in $A[T,T^{-1}]$ and $w_{g^*}= H(1)\sigma_1$ in $A[T^{-1}]_{g^*}$. In particular
$w_{T^{-1}}\equiv (0,0,1)$ mod ${EO}(\mathbb{M}(P[T,T^{-1}]))$.
By first case, we get
$w \equiv (0,0,1)$ mod ${EO}(\mathbb{M}(P[T^{-1}]))$.

As $g^{\ast}(T^{-1}=0)=1$ and $w_{g^{\ast}}=H(1)\gs_1$, we get
$w(T^{-1}=0)=H(1)\gs_1(T^{-1}=0)\equiv (0,0,1)$ mod ${EO}(\mathbb{M}(P))$ i.e. $H(1) \equiv (0,0,1)$ mod $EO(\mathbb M(P))$.
Thus $H(T) \equiv H(1) \equiv (0,0,1)$ mod ${E O}(\mathbb M(P[T]))$.
This completes the proof.
\qed

\section{Monic inversion and lifting of surjections}

Let $(I,\omega_I) \in \mathcal{LO}(P) $ i.e. $\omega_I : P\surj I/I^2$ is a surjection. If $f:P\to I$ is a lift of $\omega_I$, then there exist
$s \in I$ such that $I=(f(P),s)$ with $s(1-s) \subset f(P).$ If $s(1-s) = f(p)$ for $p \in P$, then $(p,f,s) \in \mathbb{Q} (P)$. The map
$ \chi: \mathcal{LO}(P) \rightarrow \pi_0 (\mathbb{Q} (P))$  defined by  $\chi(I,\omega_I) = [(p,f,s)] \in \pi_0 (\mathbb{Q} (P)) $ is well defined
\cite[Theorem 2.7]{MM}.
Compositing $\chi$ with the isomorphism in (\ref{4}), we get a map
 $\zeta :\mathcal {LO}(P) \rightarrow \pi_0 ({\mathbb{Q'} } (P)).$

The next result is proved in Mandal-Mishra \cite[Theorem 4.3]{MM} when $k$ is infinite perfect. This condition on $k$ was stated to use \cite[Theorem 4.13]{BK}. Now we can use (\ref{4.13}). 

\begin{theorem}\label{mondal}
Let $A$ be  a regular domain of dimension $d$ essentially of finite type over an infinite field $k$ of characteristic  $\neq 2$. Let $P$ be a projective $A$-module of rank $n$ with $2n \geq d+3.$  Let $I$ be an ideal of $A$ of height $\geq n$ and $(I,\omega_I) \in \mathcal {LO}(P)$. Then $\omega_I$ lifts to a surjection $P \surj I$ if and only if $\zeta (I,\omega_I)= [(0,0,1)]  \in \pi_0({\mathbb{Q'} } (P))$.
\end{theorem}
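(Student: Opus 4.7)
The strategy is to follow the proof of Mandal-Mishra \cite[Theorem 4.3]{MM} essentially verbatim, with the sole modification being to invoke our Theorem~\ref{4.13} in place of \cite[Theorem 4.13]{BK}. Since Theorem~\ref{4.13} drops the perfectness hypothesis on $k$, this immediately extends the Mandal-Mishra result to our setting.

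For the forward direction, I would take a surjective lift $F: P \surj I$ of $\omega_I$. Then $I = (F(P), 0)$ and $0 \cdot (1-0) + F(0) = 0$, so $(0, F, 0) \in \mathbb{Q}(P)$ represents $\chi(I, \omega_I)$. The element $(0, TF, 0) \in \mathbb{Q}(P[T])$ (which satisfies $0(1-0) + (TF)(0) = 0$) gives a path from $(0, 0, 0)$ at $T=0$ to $(0, F, 0)$ at $T=1$, so $\chi(I, \omega_I) = [(0, 0, 0)]$ is the base point of $\pi_0(\mathbb{Q}(P))$. Lemma~\ref{4} then gives $\zeta(I, \omega_I) = [(0, 0, 1)]$.

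For the converse, starting from $\zeta(I, \omega_I) = [(0, 0, 1)]$, Lemma~\ref{4} gives $\chi(I, \omega_I) = [(0, 0, 0)]$ in $\pi_0(\mathbb{Q}(P))$, so there exists $H(T) = (p(T), F(T), s(T)) \in \mathbb{Q}(P[T])$ with $H(0) = (0, 0, 0)$ and $H(1)$ representing $\chi(I, \omega_I)$. Following the Mandal-Mishra construction, after the standard moving/transversality adjustments needed to ensure the correct height (the technical content of their proof), $H(T)$ yields an ideal $J \subset A[T]$ of height $n$ together with a surjection $\omega_J: P[T] \surj J/J^2$ such that $(J(1), \omega_J(1)) = (I, \omega_I)$ and $J(0) = 0$. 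At $T = 0$ the orientation $\omega_J(0)$ trivially lifts to the zero surjection $P \surj J(0) = 0$; equivalently, $\omega_J$ is a surjection $P[T] \surj J/J^2T$ in the sense of Bhatwadekar-Keshari. Applying Theorem~\ref{4.13} produces a surjective lift $\Omega: P[T] \surj J$, and specializing at $T = 1$ yields $\Omega(1): P \surj I$ lifting $\omega_I$.

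The main obstacle is the translation of the abstract homotopy $H(T)$ into an ideal $J$ of the correct height with the prescribed endpoint behavior; this requires the patching and generic transversality techniques of Mandal-Mishra and goes through verbatim without any hypothesis on $k$. The only ingredient in their original argument that forced $k$ to be perfect was the application of \cite[Theorem 4.13]{BK} at the final lifting step, and this is exactly what our Theorem~\ref{4.13} now supplies for arbitrary infinite $k$ of characteristic $\neq 2$.
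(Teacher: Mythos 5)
Your proposal takes exactly the same approach as the paper: the result is Mandal--Mishra \cite[Theorem~4.3]{MM}, whose hypothesis that $k$ be perfect was present only to invoke \cite[Theorem~4.13]{BK}, and Theorem~\ref{4.13} of this paper supplies the replacement for merely infinite $k$, so the Mandal--Mishra proof goes through unchanged. (One small slip in your optional recap of their argument: requiring both $J(0)=0$ and $\mathrm{ht}\,J = n \geq 2$ in $A[T]$ is impossible, since $J(0)=0$ forces $J \subseteq TA[T]$, a height-one prime, hence $\mathrm{ht}\,J \leq 1$; but as you delegate the construction to Mandal--Mishra anyway, this does not affect the validity of your argument.)
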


\begin{theorem} {\label{polynomial}}
Let $A$ be  a regular domain of dimension $d$ essentially of finite type over an infinite field $k$ of characteristic  $\neq 2$. Let $P$ be a projective $A$-module of rank $n$ with $2n \geq d+3.$  Let $I$ be an ideal of $A[T]$ of height $\geq n$ and $(I,\omega_I) \in \mathcal {LO}(P[T])$. Then $\omega_I$ lifts to a surjection $P[T] \surj I$ if and only if $\zeta (I,\omega_I)= [(0,0,1)]  \in \pi_0({\mathbb{Q'} } (P[T]))$.
\end{theorem}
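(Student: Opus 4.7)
The forward direction follows from the definitions and Lemma~\ref{4}; the backward direction reduces to Theorem~\ref{4.13} by first constructing a compatible surjective lift of $\omega_I$ at $T=0$ via Theorem~\ref{mondal}, after a preliminary generic translation that ensures $I(0)$ has the right height.

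For the forward direction, given a surjective lift $\Phi:P[T]\surj I$ of $\omega_I$, I would choose the lift data $f=\Phi$, $s=0\in I$ and $p=0$, so that $(0,\Phi,0)\in\mathbb Q(P[T])$ (which satisfies $s(1-s)+f(p)=0$) represents $\chi(I,\omega_I)$. The path $(0,W\Phi,0)\in\mathbb Q(P[T][W])$ connects $(0,\Phi,0)$ to the base point $(0,0,0)$, so $\chi(I,\omega_I)$ is the base point of $\pi_0(\mathbb Q(P[T]))$, and Lemma~\ref{4} gives $\zeta(I,\omega_I)=[(0,0,1)]$.

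For the backward direction, I would proceed in four steps. First, by functoriality of $\zeta$ under the evaluation $A[T]\to A$ at $T=0$, the hypothesis $\zeta(I,\omega_I)=[(0,0,1)]$ in $\pi_0(\mathbb Q'(P[T]))$ specializes to $\zeta(I(0),\omega_I(0))=[(0,0,1)]$ in $\pi_0(\mathbb Q'(P))$, where $I(0)\subset A$ is the image of $I$ and $\omega_I(0):P\surj I(0)/I(0)^2$ is the induced local orientation. Second, Theorem~\ref{mondal} applied to $A$ and $I(0)$ yields a surjective lift $f_0:P\surj I(0)$ of $\omega_I(0)$; the compatibility $\omega_I(0)=f_0\otimes A/I(0)$ is automatic since $f_0$ lifts $\omega_I(0)$. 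Third, by the Nori-type equivalence of \cite[Remark~3.9]{BS2}, the pair $(\omega_I,f_0)$ determines a surjection $\phi:P[T]\surj I/I^2T$. Finally, Theorem~\ref{4.13} lifts $\phi$ to a surjection $\Phi:P[T]\surj I$, which is the required lift of $\omega_I$ since $\phi$ reduces modulo $I^2$ to $\omega_I$.

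The main obstacle is the second step: Theorem~\ref{mondal} requires $I(0)$ to have height $\geq n$ in $A$, but the height of an ideal of $A[T]$ can drop under specialization at $T=0$. To address this, I would precede the argument with a generic translation $T\mapsto T+a$ for $a\in A$ chosen so that $T-a$ lies outside every associated prime of $I$. Since $k$ is infinite and $A[T]$ is a Cohen-Macaulay domain, the set of such $a$ is the complement of a finite union of proper $k$-affine subspaces of $A$ and hence nonempty; for such $a$, $T-a$ is regular on $A[T]/I$, so $I+(T-a)$ has height $n+1$ in $A[T]$, giving $\mathrm{ht}(I(a))=n$ in $A$. As this translation is an $A$-algebra automorphism of $A[T]$ preserving $\zeta$ by functoriality, we may reduce to $\mathrm{ht}(I(0))=n$ before running steps (i)--(iv), and then recover a lift of the original $\omega_I$ by applying the inverse automorphism.
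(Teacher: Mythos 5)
Your forward direction (surjective lift $\Rightarrow$ $\zeta$ is the base point) is correct and in fact more elementary than the paper's: you produce the explicit homotopy $(0,W\Phi,0)\in\mathbb Q(P[T][W])$ from $(0,\Phi,0)$ to $(0,0,0)$ and then apply Lemma~\ref{4}, whereas the paper instead specializes at $T=0$, invokes Theorem~\ref{mondal} on $A$, and then uses the bijectivity of $\pi_0(\mathbb Q'(P))\to\pi_0(\mathbb Q'(P[T]))$ from Lemma~\ref{extended1} to climb back up. Both are valid; yours avoids one appeal to Theorem~\ref{mondal}.

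For the backward direction you follow the paper's route exactly (specialize at $T=0$, apply Theorem~\ref{mondal} to get a surjective lift $f_0:P\surj I(0)$, combine with $\omega_I$ via \cite[Remark 3.9]{BS2} to get $\phi:P[T]\surj I/I^2T$, and finish with Theorem~\ref{4.13}). The additional concern you raise — that $\mathrm{ht}_A(I(0))$ can be $n-1$ when $T$ lies in a minimal prime of $I$, so that Theorem~\ref{mondal} does not directly apply — is a legitimate subtlety which the paper's proof passes over in silence. Your generic-translation device (choosing $a\in k$, or more generally $a\in A$, so that $T-a$ avoids the minimal primes of $I$) is the natural repair and is correct as far as it goes: since $A[T]$ is Cohen--Macaulay this does force $\mathrm{ht}_{A[T]}(I+(T-a))\geq n+1$ whenever $I+(T-a)$ is proper, hence $\mathrm{ht}_A(I(a))\geq n$, and $\zeta$ is preserved under the $A$-algebra automorphism $T\mapsto T+a$. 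One residual corner case is worth flagging: for your chosen $a$ one may have $I(a)=A$ (e.g.\ $I=(x,y,zT-1)$ in $k[x,y,z][T]$ with $a=0$), and then the Nori-type reconstruction requires a surjection $P\surj A$, i.e.\ a unimodular element of $P$, which is not automatic in the rank range $2n\geq d+3$. In that situation one has to argue that the specialized hypothesis $\zeta(A,0)=[(0,0,1)]$ itself forces $P$ to have a unimodular element (which is how the Mandal--Mishra formalism treats the unit ideal), or avoid those $a$ by a further genericity argument; your write-up does not address this explicitly. Modulo that one unaddressed corner case, the proposal is sound and is if anything more careful than the paper at the specialization step.
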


\proof
If $\zeta (I,\omega_I)= [(0,0,1)]  \in \pi_0({\mathbb{Q'} } (P[T]))$, then $\zeta (I(0),\omega_{I(0)})= [(0,0,1)]  \in \pi_0({\mathbb{Q'} } (P))$ .
By (\ref{mondal}),  the surjection $\omega_{I(0)} : P \surj I(0)/I(0)^2$ lifts to a surjection $\psi : P \surj I(0) $ such that $\psi \otimes A/I(0) =  \omega_{I(0)}.$ Thus $\omega_{I}$ lifts to a surjection $\Phi: P[T] \surj I/I^2T$ \cite[Remark 3.9]{BS2}. By  \cite[Theorem 4.13]{BK}, $\Phi$ has a  surjective lift $P[T] \surj I$ which is a lift of   $\omega_{I}.$

For converse,  consider the following commutative diagram
$$
\xymatrix{
         &\mathcal{LO}  (P[T])  \ar^{\zeta} [r] \ar^{T=0} [d]  & \pi_0({\mathbb{Q'} } (P[T]))  \ar^{T=0} [d]
         &
           &   \\
        & \mathcal{LO}  (P)  \ar^{\zeta } [r] &  \pi_0({\mathbb{Q'} } (P) )&
          & 
        }$$
If  $\omega_I$ lifts to a surjection $P[T] \surj I$,  then $\omega_{I(0)}$ also lifts to a surjection $P \surj I(0)$.
By (\ref{mondal}),  $\zeta (I(0),\omega_{I(0)})= [(0,0,1)]  \in \pi_0({\mathbb{Q'} } (P))$.
Since the right vertical map is bijective,  we conclude that  $\zeta  (I,\omega_I)= [(0,0,1)]  \in \pi_0({\mathbb{Q'} } (P[T]))$.
\qed

Now we will prove our main result.

\begin{theorem}\label{mt}
Let $A$ be  a regular domain of dimension $d$ essentially of finite type over an infinite field $k$ of characteristic  $\neq 2$. Let $P$ be a projective $A$-module of rank $n$ with $2n \geq d+3$.  Let $I$ be an ideal of $A[T]$ of height $n$ and
$\phi: P[T] \surj I/I^2$ be a surjection. Assume $\phi\otimes A(T)$ lifts to a surjection $ \theta: P[T]\otimes A(T) \surj IA(T)$.  Then $\phi$ lifts to a
surjection $\Phi : P[T] \surj I$.
 \end{theorem}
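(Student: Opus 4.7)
The plan is to recast the lifting problem as an equality in $\pi_0(\mathbb{Q'}(P[T]))$ via Theorem \ref{polynomial}, exploit the hypothesis using Theorem \ref{mondal} over the localization $A(T)$, and then descend to $A[T]$ via the monic inversion principle of Theorem \ref{monicquadratic}.

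To begin, Theorem \ref{polynomial} reduces the task to proving $\zeta(I,\phi)=[(0,0,1)]$ in $\pi_0(\mathbb{Q'}(P[T]))$. Unwinding the definition of $\zeta=\mu\circ\chi$, I would lift $\phi$ to some $f:P[T]\to I$, pick $s\in I$ with $I=(f(P[T]),s)$ and $s(1-s)=f(p)$ for some $p\in P[T]$, and set $H(T):=(2p,2f,1-2s)\in\mathbb{Q'}(P[T])$; by Lemma \ref{4} this represents $\zeta(I,\phi)$. Proposition \ref{extended}(2), together with Lemma \ref{vevpt}, identifies $\pi_0(\mathbb{Q'}(P[T]))$ with the orbit space $\mathbb{Q'}(P[T])/EO(\mathbb{M}(P[T]))$, so the target amounts to producing some $\tau\in EO(\mathbb{M}(P[T]))$ with $\tau\cdot H(T)=(0,0,1)$.

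Next, $A(T)$ is a regular domain of dimension $d$ essentially of finite type over $k$ (Krull dimension being preserved under inversion of monic polynomials), and $IA(T)$ has height $n$ in $A(T)$ provided $I$ does not contain a monic polynomial (the degenerate case $IA(T)=A(T)$ is handled separately by classical techniques specific to ideals containing monics). Since the lift $\theta$ is given by hypothesis, Theorem \ref{mondal} applied to $A(T)$ yields $\zeta(IA(T),\phi\otimes A(T))=[(0,0,1)]$ in $\pi_0(\mathbb{Q'}(P\otimes A(T)))$. By naturality of $\zeta$ under base change, $H(T)\otimes A(T)$ represents this class, and Proposition \ref{extended}(2) over $A(T)$ produces $\sigma\in EO(\mathbb{M}(P[T]\otimes A(T)))$ with $\sigma\cdot(H(T)\otimes A(T))=(0,0,1)$.

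Finally, the descent: $\sigma$ is a finite product of elementary generators, each involving only finitely many elements of $A(T)=S^{-1}A[T]$ (where $S$ is the set of monic polynomials), so clearing all denominators by a single monic $g\in A[T]$ realizes $\sigma$ as an element $\widetilde\sigma\in EO(\mathbb{M}(P[T]_g))$, and the equality $\widetilde\sigma\cdot H(T)_g=(0,0,1)$ persists in $\mathbb{Q'}(P[T]_g)$ since $A[T]_g\hookrightarrow A(T)$ is an injective flat map. Thus $H(T)_g\equiv(0,0,1)\bmod EO(\mathbb{M}(P[T]_g))$, and the monic inversion principle (Theorem \ref{monicquadratic}) upgrades this to $H(T)\equiv(0,0,1)\bmod EO(\mathbb{M}(P[T]))$. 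Translating back via Proposition \ref{extended}(2), $[H(T)]=[(0,0,1)]$ in $\pi_0(\mathbb{Q'}(P[T]))$, and Theorem \ref{polynomial} then produces the desired surjective lift $\Phi:P[T]\surj I$. The main obstacle is the descent: one must recognize that EO-triviality over $A(T)$, given by a finite product of generators with finitely many monic denominators, can be realized over $A[T]_g$ for a single monic $g$—at which point the monic inversion theorem does the remaining work of bringing the triviality back to $A[T]$ itself.
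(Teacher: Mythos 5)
Your proposal is correct and follows essentially the same route as the paper: apply Theorem~\ref{mondal} over $A(T)$, descend to $A[T]_g$ for a single monic $g$, invoke Theorem~\ref{monicquadratic} to remove the localization, and conclude via Theorem~\ref{polynomial}. You spell out the descent step (clearing denominators of the $EO$-element via Proposition~\ref{extended}(2) over $A(T)$) more explicitly than the paper, which simply asserts that such a monic $g$ exists; your version is a clean way to justify that assertion. The one place where you over-complicate is the side remark that the degenerate case $IA(T)=A(T)$ needs separate classical treatment---it does not: the unit ideal has height $\infty\ge n$ by convention, $(A(T),0)\in\mathcal{LO}(P\otimes A(T))$ is a legitimate local orientation, and the given surjection $\theta$ is still a lift, so Theorem~\ref{mondal} applies uniformly and the rest of the argument goes through unchanged.
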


\proof 
Given $(I,\phi) \in \mathcal{LO}(P[T]).$ Let $\zeta (I,\phi)= [H(T)] \in \pi_0({\mathbb{Q'} } (P[T])).$  
Since $\phi \otimes A(T)$ has a surjective lift $\theta :P[T]\otimes A(T) \surj I A(T)$ and dim $A(T)=d$, by (\ref{mondal}), $\zeta(IA(T),\phi\otimes A(T))=[(0,0,1)]$ in $\pi_0(\mathbb Q'(P[T]\otimes A(T))$.
Thus there exist a monic polynomial $g  \in A[T]$ such that  $\zeta (I_g,\phi_g)= [H(T)_g] = [(0,0,1)] \in \pi_0( {\mathbb{Q'} } (P[T]_g)).$
By (\ref{monicquadratic}), $[H(T)]=[(0,0,1)]$ in 
$\pi_0( {\mathbb{Q'} } (P[T])).$ By  (\ref{polynomial}), $\phi$ lifts to a surjection $\Phi:P[T]\surj I$.
\qed

\begin{corollary}
Let $A$ be  a regular domain of dimension $d$ essentially of finite type over an infinite field $k$ of characteristic  $\neq 2$. Let $P=Q\oplus A$ be a projective $A$-module of rank $n$ with $2n \geq d+3$.  Let $I$ be an ideal of $A[T]$ of height $n$ containing a monic polynomial $f$.  Then any surjection
$\phi: P[T] \surj I/I^2$ lifts to a surjection $\Phi:P[T]\surj I$. 
\end{corollary}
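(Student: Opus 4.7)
The plan is to apply Theorem~\ref{mt} directly; the only task is to exhibit a surjective lift of $\phi\otimes A(T)$. First I would exploit the hypothesis that $I$ contains a monic polynomial $f$: since every monic polynomial becomes a unit in $A(T)$, the extended ideal $IA(T)$ contains a unit and hence equals the whole ring $A(T)$. This collapses $IA(T)/I^2A(T)$ to zero, so $\phi\otimes A(T)$ is automatically the zero map, and therefore any surjection $P[T]\otimes A(T)\surj A(T)$ is trivially a lift of $\phi\otimes A(T)$.

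Next I would use the hypothesis $P = Q\oplus A$ to produce such a surjection. The projection onto the free summand gives a split surjection
\[
\theta : P[T]\otimes A(T) = \bigl(Q[T]\otimes A(T)\bigr)\oplus A(T) \surj A(T) = IA(T).
\]
With $\theta$ in hand, the remaining hypotheses of Theorem~\ref{mt}, namely that $A$ is a regular domain of dimension $d$ essentially of finite type over an infinite field $k$ of characteristic $\neq 2$, that $P$ has rank $n$ with $2n\geq d+3$, that $I\subset A[T]$ has height $n$, and that $\phi:P[T]\surj I/I^2$ is a surjection, are all in place. Theorem~\ref{mt} then gives the desired lift $\Phi:P[T]\surj I$.

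I do not anticipate a real obstacle: the corollary is a clean reduction to the main theorem, with the monic polynomial in $I$ and the free summand of $P$ combining to verify the $A(T)$-liftability hypothesis automatically.
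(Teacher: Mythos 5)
Your proposal is correct and matches the paper's proof in both strategy and detail: the paper likewise observes that inverting the monic $f$ makes $I$ the unit ideal, so $I/I^2$ vanishes after localization, takes the projection onto the free summand $A$ of $P=Q\oplus A$ as the trivially-valid lift, and then invokes Theorem~\ref{mt}.
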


\proof
Since $I_f=A[T]_f$, $I_f/I^2_f=0$, thus the surjection $pr_2:P[T]_f\surj I_f$ is a lift of $\phi_f$. By (\ref{mt}), $\phi$ lifts to the required surjection.
\qed

\medskip

\noindent
\textbf{Acknowledgement}

We would like to thank Arvind Asok, A. Stavrova and A.A. Ambily for replying to our queries.

\end{document}